\documentclass[a4paper,12pt]{article}

\usepackage[top=2.54cm,bottom=2.54cm,left=2.54cm,right=2.54cm]{geometry}
\usepackage{times}
\usepackage{lmodern}
\usepackage[english]{babel}
\usepackage{amsmath,amssymb,mathtools,amsthm}
\usepackage{graphicx}
\usepackage{booktabs}
\usepackage{array}
\usepackage{tabularx}
\usepackage{multirow}
\usepackage[labelfont=bf,textfont=bf]{caption}
\usepackage{enumitem}
\usepackage{natbib}
\usepackage{titlesec}
\usepackage{xcolor}
\usepackage{hyperref}

\hypersetup{
  colorlinks=true,
  citecolor=blue,
  linkcolor=blue,
  urlcolor=blue,
  pdftitle={Finite-sample certification and operating envelopes for spectral clustering and graph centrality},
  pdfauthor={Chandrasekhar Gokavarapu, Sekhar Babu Gosala, Rajeev Muthu, Vamsi Pasalapudi, Tarakarama Kapakayala},
  pdfkeywords={spectral clustering, graph centrality, confidence regions, matrix Bernstein inequality}
}
\titleformat{\section}[block]{\bfseries\raggedright}
{\makebox[1.27cm][l]{\thesection.}}{0in}{}
\titlespacing*{\section}{0pt}{3mm}{0pt}
\titleformat{\subsection}[block]{\bfseries}
{\makebox[1.27cm][l]{\thesubsection.}}{0in}{}
\titlespacing*{\subsection}{0pt}{3mm}{0pt}
\titleformat{\subsubsection}[block]{\bfseries}
{\makebox[1.27cm][l]{\thesubsubsection.}}{0in}{}
\titlespacing*{\subsubsection}{0pt}{3mm}{0pt}

\newtheoremstyle{sa}{3mm}{2mm}{}{}{\bfseries}{:}{.5em}{}
\theoremstyle{sa}
\newtheorem{theorem}{Theorem}
\newtheorem{corollary}[theorem]{Corollary}
\newtheorem{lemma}[theorem]{Lemma}
\newtheorem{proposition}[theorem]{Proposition}

\newtheorem{remark}[theorem]{Remark}

\renewenvironment{proof}{{\noindent\bfseries Proof:}}{\hfill$\square$}

\newcommand{\R}{\mathbb{R}}
\newcommand{\E}{\mathbb{E}}
\newcommand{\PP}{\mathbb{P}}
\newcommand{\1}{\mathbf{1}}
\newcommand{\norm}[1]{\left\lVert#1\right\rVert}

\newcommand{\Gr}{\operatorname{Gr}}
\newcommand{\dgr}{d_{\mathrm{Gr}}}

\newcommand{\diam}{\operatorname{diam}}
\newcommand{\TV}{\operatorname{TV}}
\newcommand{\KL}{\operatorname{KL}}

\newcommand{\KarateAccuracy}{0.971}
\newcommand{\KarateHamming}{1}
\newcommand{\KarateBoundaryGap}{2.061}
\newcommand{\KarateOperatorQuantile}{28.119}
\newcommand{\KarateGapLower}{-54.177}

\newcommand{\KarateDegreeHalf}{16.695}
\newcommand{\UnequalDegreeMargin}{499.5}
\newcommand{\UnequalDegreeHalf}{84.1}
\newcommand{\UnequalObservedGap}{404}
\newcommand{\KatzMarginRatio}{2.08}
\newcommand{\CentralityFiring}{0.711}
\newcommand{\CentralityCorrectGivenFire}{1.000}

\newcommand{\ExactRadiusThreshold}{1{,}610}
\newcommand{\DegreeRadiusThreshold}{2{,}106}
\newcommand{\HammingThreshold}{58{,}508}

\title{\bfseries Finite-sample certification and operating envelopes for
spectral clustering and graph centrality}
\author{Chandrasekhar Gokavarapu$^{1}$\thanks{Corresponding author:
\href{mailto:chandrasekhargokavarapu@gmail.com}
{chandrasekhargokavarapu@gmail.com}}, Sekhar Babu Gosala$^{1}$,
Rajeev Muthu$^{1}$,\\
Vamsi Pasalapudi$^{2}$ and Tarakarama Kapakayala$^{2}$\\[2mm]
\small $^{1}$Department of Mathematics, Government College (Autonomous),\\[-1mm]
\small Rajahmundry, Andhra Pradesh, India\\
\small $^{2}$M.Sc. Mathematics Programme, Government College (Autonomous),\\[-1mm]
\small Rajahmundry, Andhra Pradesh, India}
\date{July 2026}

\begin{document}
\maketitle
\vspace{-5mm}

\noindent\rule{\linewidth}{0.8pt}
\vspace{-3mm}

\noindent{\bfseries Abstract}\quad
Spectral clustering and node rankings are commonly reported from one observed
network without a finite-sample statement of what the observation supports.
We develop a certification protocol that either returns a coverage-guaranteed
set or explicitly returns ``no nontrivial certificate.''  For an
inhomogeneous Bernoulli graph, a matrix-Bernstein quantile with all numerical
constants and its ambient-dimension factor retained is combined with a
one-sided spectral-gap certificate.  The resulting Grassmann ball is valid at
finite \(n\), but is reported as informative only when its radius is below the
   diameter of the Grassmannian.  We propagate the ball through a
   certificate-bearing approximate \(k\)-means map under declared population
   separation and minimum-cluster envelopes, derive simultaneous bands and an observed-gap
certificate for degree centrality, and give a corrected normalized-Katz
extension.  A \(12\)-cell simulation study with \(1{,}000\) graphs per cell
maps the difference between coverage and usefulness.  The submitted
\(n=200\) block-model example is shown to be necessarily vacuous after the
dimension factor is restored; in the benchmark \(p=0.30,q=0.10\), the
   subspace radius first falls below one at
   \(n=\ExactRadiusThreshold\), whereas the mean-square clustering certificate
   remains unavailable until \(n=\HammingThreshold\).
An unequal-block example produces a genuine centrality certificate, while an
analysis of the Zachary karate-club network correctly declines to certify
despite \(97.1\%\) agreement with the observed factions.  These results
separate algorithmic success, coverage validity and inferential
informativeness.

\medskip
\noindent{\itshape Key words:} spectral clustering; graph centrality;
confidence regions; matrix Bernstein inequality; stochastic block model;
network bootstrap.

\noindent{\bfseries AMS Subject Classifications:} 62G15; 62H30; 05C80; 15A18.

\noindent\rule{\linewidth}{0.8pt}

\section{Introduction}\label{sec:intro}

An analyst observes one network, computes two eigenvectors of its adjacency
matrix, runs \(k\)-means and reports a partition.  A second analyst ranks the
same vertices by a centrality score.  Both outputs are deterministic functions
of the observed adjacency matrix, but neither is the corresponding population
object under a sampling model.  The inferential question is therefore not
whether an algorithm returned an answer.  It is whether the single observed
network supports a nontrivial set of population answers at a declared
confidence level.

Two obstructions must be handled.  First, eigenspaces are unstable when the
population spectral gap at the selected boundary is small.  Second, an
eigenbasis is defined only up to orthogonal rotation, so uncertainty belongs
on a Grassmannian rather than in the entries of an arbitrarily oriented
eigenvector matrix.  Davis--Kahan perturbation theory addresses the second
point and quantifies the first, but a perturbation inequality is not by itself
a confidence statement \citep{DavisKahan1970RotationIII,YuWangSamworth2015DavisKahanVariant}.
It must be paired with a finite-\(n\) quantile for the random operator error and
with a verifiable lower bound on the population gap.

The present paper builds and audits that pairing.  Matrix Bernstein gives an
explicit operator quantile, but its dimension factor cannot be suppressed:
for an \(n\times n\) self-adjoint sum the tail prefactor is proportional to
\(n\) \citep{Tropp2012UserFriendlyTailBounds}.  Retaining this factor can turn
an apparently informative radius into the entire Grassmannian.  We treat that
outcome as a result rather than as a numerical inconvenience.  The protocol
returns a valid ball and separately records whether its radius is below the
diameter one.  When the answer is negative, the output is ``no nontrivial
certificate.''

This distinction is important because spectral clustering can perform well
even when a uniform finite-sample certificate is too conservative to say so.
Classical theory establishes consistency and recovery in stochastic block
models under increasingly weak conditions
\citep{RoheChatterjeeYu2011SpectralHighDimSBM,LeiRinaldo2015SpectralClusteringSBM,Abbe2018SBMSurvey}.
Adjacency spectral embedding also has refined asymptotic distribution theory
\citep{SussmanTangFishkindPriebe2012ASE,AthreyaEtAl2016RDPGCLT,RubinDelanchyCapeTangPriebe2022GRDPGJRSSB}.
Those results answer different questions from a finite-sample confidence set
constructed from one graph.  Bootstrap methods provide another route, but
their validity depends on how the unobserved probability matrix is estimated;
recent work explicitly tests and improves network bootstraps through
nearest-neighbour smoothing
\citep{DilworthDavisLawson2025ValidBootstrapsNetworkEmbeddings}.

Our contribution is not a new concentration inequality or a new clustering
algorithm.  It is an end-to-end, numerically auditable inferential protocol
with four components:
\begin{enumerate}[leftmargin=18pt,label=(\roman*)]
\item an exact-constant matrix-Bernstein quantile and a coverage theorem for
the top-\(k\) population eigenspace;
\item a fully observable version using a simultaneous upper confidence bound
for the maximum expected degree and a Weyl lower bound for the population
gap;
\item propagation to certificate-bearing approximate \(k\)-means labels under
declared population envelopes, and centrality selections through observed
margins; and
\item an operating-envelope analysis that reports where a valid region is
nontrivial, supported by simulations, bootstrap comparisons and a real
network diagnostic.
\end{enumerate}

The empirical conclusion is deliberately mixed.  Coverage is easy when a ball
equals the whole parameter space; useful coverage is not.  In the balanced
dense block model used below, the analytic eigenspace region eventually
becomes nontrivial, but the Frobenius-to-Hamming propagation needs much larger
\(n\).  A strongly heterogeneous model supports centrality selection, whereas
the small karate-club network returns no certificate even though its spectral
partition nearly reproduces the recorded split.  This is the behaviour a
certificate should have: it must be allowed to decline.

\subsection{Relation to existing inferential approaches}

There are four neighbouring literatures, but none can be inserted as a
black-box justification for the present confidence set.  First, random-matrix
concentration controls \(\norm{A-P}\).  Matrix Bernstein is uniform and
explicit but conservative; graph-specific results sharpen the rate in
moderately sparse regimes and regularisation restores concentration in regimes
where raw adjacency fails
\citep{Tropp2012UserFriendlyTailBounds,LeLevinaVershynin2017ConcentrationRegularization}.
Our use of Bernstein is motivated by auditability: its assumptions, numerical
constant and dimension dependence can all be displayed in one line.

Second, distributional theory for spectral projectors and embeddings provides
normal approximations, central limit theorems and bootstrap approximations
\citep{KoltchinskiiLounici2017SpectralProjectorsAOS,
JirakWahl2024BootstrapSpectralProjectors,AthreyaEtAl2016RDPGCLT}.
These results can be substantially sharper than a uniform norm ball.  They
also target specific asymptotic regimes and require their own approximation
errors to be controlled before they become finite-sample confidence regions.
The present paper does not treat an asymptotic quantile as exact at the
observed \(n\).

Third, network bootstrapping must estimate a probability matrix from one
adjacency matrix.  Different smoothers can yield plausible but statistically
distinguishable bootstrap graphs.  The validation scheme and ASE-\(k\)NN
smoother of
\citet{DilworthDavisLawson2025ValidBootstrapsNetworkEmbeddings} directly
address this issue.  We use a simplified version of that smoother as an
empirical comparator, without implementing or claiming its exchangeability
validation step, and not as a replacement for the analytic coverage theorem.

Post-selection work for a single observed network instead uses data thinning
or splitting to conduct inference on selected mean connectivities
\citep{AncellWittenKessler2026PostSelectionInferenceNetwork}.  Its target and
randomisation differ from the present eigenspace ball.  Likewise, the
nonparametric spectral-clustering bootstrap of
\citet{WelshShreeves2022NonparametricBootstrapSpectralClustering} is developed
for Euclidean observations and mixture-model clustering, not for estimating a
Bernoulli edge-probability matrix from one network.  We therefore do not treat
either method as an interchangeable network-bootstrap comparator.

Fourth, the strongest community-detection theory often controls individual
rows or misclassification directly rather than passing through a uniform
Grassmann ball.  Approximate \(k\)-means transfer
\citep{LeiRinaldo2015SpectralClusteringSBM} and modern
\(\ell_p\) eigenvector perturbation \citep{AbbeFanWang2022LpPCASpectralClustering}
can therefore prove recovery where our mean-square Hamming set is still
trivial.  This is expected: a general confidence region pays for uniformity
over every admissible perturbation, whereas a model-specific recovery theorem
uses more structure.

\begin{table}[t]
\centering
\caption{Different uncertainty questions for one observed network.}
\label{tab:approaches}
\footnotesize
\setlength{\tabcolsep}{4pt}
\begin{tabularx}{\textwidth}{
  >{\raggedright\arraybackslash}p{0.19\textwidth}
  >{\raggedright\arraybackslash}p{0.22\textwidth}
  >{\raggedright\arraybackslash}p{0.19\textwidth}
  >{\raggedright\arraybackslash}X}
\toprule
Approach&Primary target&Calibration&Typical output\\
\midrule
Uniform operator certificate&Population eigenspace&Finite-sample analytic
tail&Coverage ball or no nontrivial certificate\\
Embedding CLT/projector approximation&Local coordinates or projector
functionals&Asymptotic approximation&Standard errors or approximate
quantiles\\
Network bootstrap&Distribution induced by an estimated \(P\)&Empirical or
method-specific validity&Bootstrap cloud or radius\\
Recovery theorem&Labels under a structured model&High-probability model
bound&Error rate or exact recovery\\
\bottomrule
\end{tabularx}
\end{table}

\section{Model, geometry and an explicit operator quantile}\label{sec:model}

\subsection{Sampling model and target}

Let \(A\in\{0,1\}^{n\times n}\) be symmetric with zero diagonal.  Conditional
on a fixed probability array \(P=(P_{ij})\), the variables
\(\{A_{ij}:i<j\}\) are independent and
\[
 A_{ij}\sim\operatorname{Bernoulli}(P_{ij}),\qquad
 A_{ji}=A_{ij},\qquad A_{ii}=P_{ii}=0.
\tag{2.1}\label{eq:model}
\]
An SBM is specified off the diagonal by
\(P_{ij}=B_{g_i g_j}\) for \(i\ne j\); the diagonal is then set to zero.
This convention avoids the common but inconsistent simultaneous assertions
\(P=ZBZ^\top\) and \(P_{ii}=0\) when \(B_{aa}>0\).

Write \(\lambda_1(M)\ge\cdots\ge\lambda_n(M)\) for the algebraically ordered
eigenvalues of a symmetric \(M\).  Fix \(k<n\), and let \(U_\star\) and
\(\widehat U\) contain orthonormal bases for the top-\(k\) eigenspaces of
\(P\) and \(A\).  Only the boundary gap
\[
 g_k(P):=\lambda_k(P)-\lambda_{k+1}(P)
\tag{2.2}\label{eq:boundary-gap}
\]
is required; eigenvalue multiplicity inside the selected \(k\)-dimensional
cluster is harmless.  The parameter is the projector
\(\Pi_\star=U_\star U_\star^\top\).  For \(U,V\in\Gr(k,n)\), define
\[
 \dgr(U,V):=\norm{UU^\top-VV^\top}.
\tag{2.3}\label{eq:grassmann}
\]
This is the largest sine of the principal angles and is at most one.

\subsection{Matrix Bernstein with numerical constants}

Define the variance proxy
\[
 v(P):=\max_i\sum_{j\ne i}P_{ij}(1-P_{ij}).
\tag{2.4}\label{eq:vproxy}
\]
For \(x_{n,\alpha}:=\log(2n/\alpha)\), set
\[
 q_{n,\alpha}(\bar v)
 :=\sqrt{2\bar v\,x_{n,\alpha}}+\frac{2}{3}x_{n,\alpha}.
\tag{2.5}\label{eq:q}
\]

\begin{lemma}[Explicit adjacency quantile]\label{lem:bernstein}
Under \eqref{eq:model}, if \(v(P)\le\bar v\), then
\[
 \PP_P\!\left\{\norm{A-P}\le q_{n,\alpha}(\bar v)\right\}\ge1-\alpha.
\tag{2.6}\label{eq:bernstein}
\]
\end{lemma}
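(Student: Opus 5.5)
We will obtain \eqref{eq:bernstein} from the self-adjoint matrix Bernstein inequality of \citet{Tropp2012UserFriendlyTailBounds} by writing \(A-P\) as a sum of independent, centred, bounded symmetric matrices, one for each unordered pair. For \(i<j\) let \(E^{(ij)}:=e_ie_j^\top+e_je_i^\top\) and put \(X_{ij}:=(A_{ij}-P_{ij})E^{(ij)}\). Because \(A_{ii}=P_{ii}=0\), we have \(A-P=\sum_{i<j}X_{ij}\). Under \eqref{eq:model} the summands are independent and satisfy \(\E X_{ij}=0\). Moreover \(\norm{E^{(ij)}}=1\) and \(|A_{ij}-P_{ij}|\le1\), so \(\norm{X_{ij}}\le L:=1\).

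The variance term comes next. Since \((E^{(ij)})^2=e_ie_i^\top+e_je_j^\top\), we get
\[
\sum_{i<j}\E X_{ij}^2=\sum_{i<j}P_{ij}(1-P_{ij})\bigl(e_ie_i^\top+e_je_j^\top\bigr)
=\operatorname{diag}\Bigl(\textstyle\sum_{j\ne i}P_{ij}(1-P_{ij})\Bigr)_{i=1}^n .
\]
This matrix is diagonal, so its norm is exactly \(v(P)\le\bar v\). Matrix Bernstein for \(n\times n\) Hermitian sums, applied to both \(\lambda_{\max}\) and \(\lambda_{\max}\) of the negated sum, gives
\[
\PP\{\norm{A-P}\ge t\}\le 2n\exp\!\Bigl(-\frac{t^2/2}{\bar v+t/3}\Bigr).
\]
Here we use that the bound is monotone in the variance parameter, so \(\bar v\) may replace \(v(P)\).

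It remains to show that \(t=q:=q_{n,\alpha}(\bar v)\) makes the exponent at least \(x:=x_{n,\alpha}=\log(2n/\alpha)\). The right-hand side is then at most \(2n\,e^{-x}=\alpha\), and we conclude by taking complements. The condition \(\frac{t^2/2}{\bar v+t/3}\ge x\) is equivalent to \(t^2-\tfrac{2x}{3}t-2\bar vx\ge0\). This holds whenever \(t\) is at least the positive root
\[
t_+=\tfrac{x}{3}+\sqrt{x^2/9+2\bar vx}.
\]
Subadditivity of the square root gives \(t_+\le \tfrac{x}{3}+\tfrac{x}{3}+\sqrt{2\bar vx}=q\), so the exponent condition holds at \(t=q\).

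The main thing to check carefully is the form of the Bernstein statement being invoked. The version with a strict inequality \(\PP\{\cdot\ge t\}\) matches the \(\le q\) event in \eqref{eq:bernstein}. The norm bound uses the two-sided version, whose prefactor is \(2n\) from the \(n+n\) dimension terms; that factor of \(2\) is exactly what \(x_{n,\alpha}=\log(2n/\alpha)\) accounts for. The remaining steps are bookkeeping.
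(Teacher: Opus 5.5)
Your proposal is correct and follows the paper's proof essentially step for step. You use the same decomposition $X_{ij}=(A_{ij}-P_{ij})(e_ie_j^\top+e_je_i^\top)$, the same diagonal variance matrix with norm $v(P)$, and the two-sided matrix Bernstein bound with prefactor $2n$; your explicit check that $q_{n,\alpha}(\bar v)$ dominates the positive root $t_+$, and your remark on monotonicity in the variance parameter, only fill in details the paper leaves implicit.
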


\begin{proof}
For \(i<j\), put
\[
 X_{ij}=(A_{ij}-P_{ij})(e_ie_j^\top+e_je_i^\top).
\]
Then \(A-P=\sum_{i<j}X_{ij}\), \(\E X_{ij}=0\), and
\(\norm{X_{ij}}\le1\).  Moreover,
\[
 \norm{\textstyle\sum_{i<j}\E X_{ij}^2}
 =\max_i\sum_{j\ne i}P_{ij}(1-P_{ij})=v(P).
\]
The two-sided self-adjoint matrix Bernstein inequality gives
\[
 \PP\{\norm{A-P}\ge t\}
 \le2n\exp\!\left\{-\frac{t^2}{2(v(P)+t/3)}\right\}.
\]
Substitution of \(t=\sqrt{2\bar v x}+2x/3\) makes the exponent at
least \(x=x_{n,\alpha}\), proving \eqref{eq:bernstein}
\citep{Tropp2012UserFriendlyTailBounds}.
\end{proof}

\begin{remark}\label{rem:dimension}
The factor \(2n\) is not cosmetic.  Replacing \(x_{n,\alpha}\) by
\(\log(2/\alpha)\) changes the numerical operating envelope.  Sharper
dimension-free random-graph bounds are available under additional expected
degree or regularisation conditions, but those conditions must then appear in
the certificate \citep{LeLevinaVershynin2017ConcentrationRegularization}.
\end{remark}

\section{Finite-sample eigenspace certificates}\label{sec:subspace}

\subsection{Model-envelope certificate}

\begin{theorem}[Grassmann confidence ball]\label{thm:grassmann}
Assume \(v(P)\le\bar v\) and \(g_k(P)\ge\underline g>0\), where
\(\bar v\) and \(\underline g\) are certified before observing \(A\).  Let
\[
 q=q_{n,\alpha}(\bar v),\qquad
 r_{\rm raw}=\frac{2q}{\underline g},\qquad
 r=\min\{1,r_{\rm raw}\}.
\tag{3.1}\label{eq:radius}
\]
Then
\[
 \mathcal C_{n,\alpha}
 =\{U\in\Gr(k,n):\dgr(U,\widehat U)\le r\}
\tag{3.2}\label{eq:ball}
\]
satisfies
\[
 \PP_P\{U_\star\in\mathcal C_{n,\alpha}\}\ge1-\alpha.
\tag{3.3}\label{eq:coverage}
\]
The certificate is informative precisely when \(r_{\rm raw}<1\).
\end{theorem}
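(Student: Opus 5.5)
The proof combines Lemma~\ref{lem:bernstein} with a Davis--Kahan $\sin\Theta$ bound in the form of \citet{YuWangSamworth2015DavisKahanVariant}. We work on the event
\[
 E=\{\norm{A-P}\le q\},\qquad q=q_{n,\alpha}(\bar v).
\]
Since $v(P)\le\bar v$ is assumed, Lemma~\ref{lem:bernstein} gives $\PP_P(E)\ge1-\alpha$. It therefore suffices to show the deterministic implication: on $E$, $\dgr(U_\star,\widehat U)\le r$. Because $\bar v$ and $\underline g$ are fixed before $A$ is observed, the radius $r$ is non-random, and no data-dependent calibration arises.

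On $E$ there are two cases. If $r_{\rm raw}\ge1$, then $r=1$. Since $\dgr$ is the operator norm of a difference of two orthogonal projectors, it is at most one, as noted after \eqref{eq:grassmann}. Hence $U_\star\in\mathcal C_{n,\alpha}$ trivially.

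If $r_{\rm raw}<1$, we invoke the Yu--Wang--Samworth variant of Davis--Kahan for the top-$k$ eigenspace of symmetric matrices. It requires only the population gap $g_k(P)=\lambda_k(P)-\lambda_{k+1}(P)>0$ and gives
\[
 \norm{\sin\Theta(\widehat U,U_\star)}\le\frac{2\norm{A-P}}{g_k(P)}.
\]
Here we use the operator-norm version, which holds with constant $2$ since the Frobenius version is derived from it via $\norm{\cdot}_F\le\sqrt{k}\norm{\cdot}$. Because $\dgr$ equals the largest sine of the principal angles, this yields
\[
 \dgr(\widehat U,U_\star)\le\frac{2q}{\underline g}=r_{\rm raw}=r.
\]
Alternatively, one can avoid citing the variant: if $\norm{A-P}<g_k(P)/2$, Weyl's inequality separates the spectra so that the classical Davis--Kahan theorem applies with separation at least $g_k-\norm{A-P}\ge g_k/2$. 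If $\norm{A-P}\ge g_k/2$, the bound $2\norm{A-P}/g_k\ge1$ holds trivially. Either route gives the factor $2$ matching \eqref{eq:radius}.

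The informativeness claim then follows directly. The diameter of $\Gr(k,n)$ under $\dgr$ is $1$ when $k\le n/2$, attained by orthogonal subspaces, so the ball is all of $\Gr(k,n)$ when $r=1$. When $r_{\rm raw}<1$, it is a proper subset. The main obstacle I expect is exactly this diameter claim when $k>n/2$: two $k$-dimensional subspaces then always intersect, yet the largest principal angle can still reach $\pi/2$ whenever $k<n$, so the diameter remains $1$. I would verify this by exhibiting two subspaces that share a $(2k-n)$-dimensional part and are orthogonal on the remaining directions. A secondary point to check is the precise constant in the operator-norm form of the Davis--Kahan variant; the Weyl-based argument above secures it independently.
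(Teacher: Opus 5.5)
Your proposal is correct and follows the paper's proof. You restrict to the matrix-Bernstein event of Lemma~\ref{lem:bernstein} and apply the Davis--Kahan projector bound \(\dgr(\widehat U,U_\star)\le\min\{1,2\norm{A-P}/g_k(P)\}\le r\).

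Your Weyl-plus-classical-Davis--Kahan case split is what actually secures the constant \(2\). The sentence deducing the operator-norm constant from the Frobenius version is not a valid inference and can be dropped. Your expected obstacle for \(k>n/2\) is not one: properness when \(r_{\rm raw}<1\) needs only a \(U\) containing one unit vector orthogonal to \(\widehat U\), which exists because \(k<n\), and any such \(U\) has \(\dgr(U,\widehat U)=1\).
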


\begin{proof}
On the event in Lemma~\ref{lem:bernstein}, the Davis--Kahan projector
inequality gives
\[
 \dgr(\widehat U,U_\star)
 \le\min\!\left\{1,\frac{2\norm{A-P}}{g_k(P)}\right\}
 \le r.
\]
The event has probability at least \(1-\alpha\).
\end{proof}

The theorem separates validity from informativeness.  If
\(r_{\rm raw}\ge1\), \(\mathcal C_{n,\alpha}\) is the whole
\(\Gr(k,n)\); its coverage is one but it carries no information.  Our
implementation reports both the coverage-guaranteed set and the binary field
\({\tt informative}=\mathbf1\{r_{\rm raw}<1\}\).

\subsection{A fully observable certificate}

The inputs \(\bar v\) and \(\underline g\) may be known in a designed SBM
experiment but are not known in an arbitrary observed network.  We therefore
give a conservative certificate computed from \(A\) alone.

Let \(D_i=\sum_{j\ne i}A_{ij}\).  For \(\alpha_d\in(0,1)\), put
\[
 \bar d(A,\alpha_d)
 =\min\!\left\{n-1,\,
 \max_i\left(\sqrt{\frac{x_d}{2}}+
 \sqrt{D_i+\frac{x_d}{2}}\right)^2\right\},
 \qquad x_d=\log(n/\alpha_d).
\tag{3.4}\label{eq:degree-ucb}
\]

\begin{lemma}[Expected-degree upper certificate]\label{lem:degree-ucb}
With probability at least \(1-\alpha_d\),
\[
 \max_i\sum_{j\ne i}P_{ij}\le\bar d(A,\alpha_d),
 \qquad v(P)\le\bar d(A,\alpha_d).
\tag{3.5}\label{eq:degree-ucb-event}
\]
\end{lemma}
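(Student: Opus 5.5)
We prove the event vertex by vertex and then take a union bound over the \(n\) vertices, each receiving failure probability \(\alpha_d/n\). That is where \(x_d=\log(n/\alpha_d)\) comes from. Fix \(i\) and write \(\mu_i=\sum_{j\ne i}P_{ij}\), so that \(D_i=\sum_{j\ne i}A_{ij}\) is a sum of \(n-1\) independent Bernoulli variables with mean \(\mu_i\). Symmetry of \(A\) is irrelevant here: for fixed \(i\), the variables \(A_{ij}\) with \(j\ne i\) correspond to distinct unordered pairs and are therefore independent under \eqref{eq:model}. The goal is a lower-tail bound showing that \(\mu_i\) cannot be much larger than \(D_i\).

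The main step is a one-sided Bernstein (or Chernoff) lower-tail inequality. Each summand is bounded above by its mean, so \(\mu_i-D_i\) is a sum of independent variables bounded above by \(P_{ij}\le1\) with variance \(\sum_j P_{ij}(1-P_{ij})\le\mu_i\). For such sums the lower tail is sub-Gaussian with no range term: \(\PP\{D_i\le\mu_i-t\}\le\exp\{-t^2/(2\mu_i)\}\). One route is the multiplicative Chernoff bound \(\PP\{D_i\le(1-\delta)\mu_i\}\le e^{-\delta^2\mu_i/2}\). Another is the standard bound \(\log\E e^{-\lambda(X-\E X)}\le\lambda^2\E X^2/2\) for \(X\in[0,1]\) and \(\lambda\ge0\), which gives the variance proxy \(\sum_j P_{ij}\le\mu_i\). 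Setting \(t=\sqrt{2\mu_i x_d}\) gives
\[
\PP\{D_i\le\mu_i-\sqrt{2\mu_i x_d}\}\le\alpha_d/n.
\]

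Next, invert the quadratic in \(s=\sqrt{\mu_i}\). On the complement event we have \(s^2-\sqrt{2x_d}\,s-D_i<0\), hence
\[
s<\sqrt{x_d/2}+\sqrt{x_d/2+D_i},
\]
which yields \(\mu_i\le(\sqrt{x_d/2}+\sqrt{D_i+x_d/2})^2\). The union bound over \(i\) then bounds \(\max_i\mu_i\) by the maximum over \(i\) of these quantities, with probability at least \(1-\alpha_d\). Taking the minimum with \(n-1\) is harmless, since \(\mu_i\le n-1\) holds deterministically. Finally, \(v(P)=\max_i\sum_{j\ne i}P_{ij}(1-P_{ij})\le\max_i\mu_i\) because \(1-P_{ij}\le1\), so the second inequality in \eqref{eq:degree-ucb-event} holds on the same event.

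The only delicate point is the first step. The tail bound must have exactly the form \(\exp\{-t^2/(2\mu_i)\}\), with no \(t/3\) range correction, because that is what makes the inversion reproduce the stated closed form precisely. We would justify it by citing the lower-tail Chernoff bound for sums of independent \([0,1]\) variables. Note also that one-sidedness matters: only the upper confidence bound is needed, so there is no factor \(2\) in \(x_d\).
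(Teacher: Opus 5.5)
Your proposal is correct and follows the paper's proof essentially step for step: a one-sided Chernoff lower-tail bound \(\PP\{\mu_i-D_i\ge\sqrt{2\mu_i x_d}\}\le e^{-x_d}\) at each vertex, a union bound over the \(n\) vertices, inversion of the quadratic in \(\sqrt{\mu_i}\), and the bound \(\sum_{j\ne i}P_{ij}(1-P_{ij})\le\mu_i\) for the variance proxy. One minor technicality is shared with the paper. When \(\mu_i=0\) the displayed tail bound is false, because \(\{D_i\le 0\}\) then has probability one, although \(\mu_i\le\bar d(A,\alpha_d)\) holds trivially in that case; it is therefore cleaner to bound the event \(\{\mu_i>(\sqrt{x_d/2}+\sqrt{D_i+x_d/2})^2\}\) directly, since it is empty when \(\mu_i=0\).
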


\begin{proof}
Let \(\mu_i=\E D_i\).  The Chernoff lower-tail inequality
\(\PP\{\mu_i-D_i\ge\sqrt{2\mu_i x_d}\}\le e^{-x_d}\), followed by
the union bound, holds simultaneously for all \(i\).  Solving
\(\mu_i-D_i\le\sqrt{2\mu_i x_d}\) for \(\mu_i\) gives
\eqref{eq:degree-ucb}.  Finally,
\(\sum_jP_{ij}(1-P_{ij})\le\mu_i\).
\end{proof}

\begin{corollary}[Single-graph diagnostic]\label{cor:observable}
Choose \(\alpha_d+\alpha_q=\alpha\).  Compute
\[
 \bar d=\bar d(A,\alpha_d),\qquad
 q=q_{n,\alpha_q}(\bar d),\qquad
 \widehat g_k=\lambda_k(A)-\lambda_{k+1}(A),
\]
and
\[
 \underline g_{\rm obs}=\widehat g_k-2q.
\tag{3.6}\label{eq:observed-gap}
\]
If \(\underline g_{\rm obs}>0\), the ball centred at \(\widehat U\) with
radius
\[
 r_{\rm obs}
 =\min\!\left\{1,\frac{2q}{\underline g_{\rm obs}}\right\}
\tag{3.7}\label{eq:observed-radius}
\]
has coverage at least \(1-\alpha\).  It is nontrivial only if
\(\widehat g_k>4q\).  If either inequality fails, the protocol returns
``no gap certificate'' or ``valid but trivial,'' respectively.
\end{corollary}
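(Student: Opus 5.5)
The plan is to intersect the two high-probability events already available and then argue deterministically on that intersection. Let \(E_d\) denote the event \eqref{eq:degree-ucb-event} of Lemma~\ref{lem:degree-ucb}, which has probability at least \(1-\alpha_d\).

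The operator event needs care because \(q=q_{n,\alpha_q}(\bar d)\) is data-dependent, so Lemma~\ref{lem:bernstein} cannot be applied with \(\bar v=\bar d\) as if \(\bar d\) were fixed in advance. I would instead apply Lemma~\ref{lem:bernstein} with the deterministic choice \(\bar v=v(P)\) and level \(\alpha_q\). This gives an event \(E_q=\{\norm{A-P}\le q_{n,\alpha_q}(v(P))\}\) with probability at least \(1-\alpha_q\). Since \(q_{n,\alpha}(\cdot)\) in \eqref{eq:q} is nondecreasing in its argument, on \(E_d\) we have \(v(P)\le\bar d\) and hence \(q_{n,\alpha_q}(v(P))\le q\). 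So on \(E_d\cap E_q\) we get \(\norm{A-P}\le q\). A union bound gives \(\PP(E_d\cap E_q)\ge1-\alpha_d-\alpha_q=1-\alpha\). This monotonicity device, which avoids plugging a random envelope into a fixed-envelope lemma, is the step I regard as the main point of rigor.

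On \(E_d\cap E_q\), I would argue deterministically as follows.

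\textbf{Gap lower bound.} Weyl's inequality gives \(|\lambda_j(A)-\lambda_j(P)|\le\norm{A-P}\le q\) for every \(j\). Hence
\[
 g_k(P)\ge \widehat g_k-2q=\underline g_{\rm obs}.
\]
If \(\underline g_{\rm obs}>0\), this is a genuine positive lower bound on the population gap.

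\textbf{Radius.} Repeating the Davis--Kahan step in the proof of Theorem~\ref{thm:grassmann} gives
\[
 \dgr(\widehat U,U_\star)\le\min\Bigl\{1,\frac{2\norm{A-P}}{g_k(P)}\Bigr\}\le\min\Bigl\{1,\frac{2q}{\underline g_{\rm obs}}\Bigr\}=r_{\rm obs}.
\]
This holds even though \(\underline g_{\rm obs}\) is random, because the inequality is pointwise on the event. Therefore \(U_\star\) lies in the ball with probability at least \(1-\alpha\).

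One caveat concerns the case \(\underline g_{\rm obs}\le0\). The protocol then returns no ball, so coverage should be read as coverage of the reported set with the convention that the trivial output is \(\Gr(k,n)\). That set trivially contains \(U_\star\), and the bound still holds.

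For nontriviality, \(r_{\rm obs}<1\) means \(2q<\widehat g_k-2q\), that is, \(\widehat g_k>4q\). This condition also implies \(\underline g_{\rm obs}>0\). The two failure modes then correspond to the two reported labels: ``no gap certificate'' when \(\widehat g_k\le 2q\), and ``valid but trivial'' when \(2q<\widehat g_k\le4q\).
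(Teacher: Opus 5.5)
Your proof is correct and takes essentially the same route as the paper. Both arguments bound \(\PP\{\norm{A-P}>q,\ v(P)\le\bar d\}\) by applying Lemma~\ref{lem:bernstein} at the deterministic level \(v(P)\) and using that \(q_{n,\alpha_q}(\cdot)\) is nondecreasing, then combine Weyl's inequality \(g_k(P)\ge\widehat g_k-2q\) with Davis--Kahan and a union bound over \(\alpha_d+\alpha_q\). Where you differ, it is in care rather than method: you run the Davis--Kahan step pointwise on the good event instead of citing Theorem~\ref{thm:grassmann} (whose gap envelope is formally fixed in advance), and you read coverage with the declined output taken as \(\Gr(k,n)\). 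Both are modest sharpenings of the paper's terse proof.
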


\begin{proof}
On the event in Lemma~\ref{lem:degree-ucb},
\(q_{n,\alpha_q}(\bar d)\ge q_{n,\alpha_q}(v(P))\).  Hence
\[
 \PP\{\norm{A-P}>q,\ v(P)\le\bar d\}\le\alpha_q.
\]
Weyl's inequality gives \(g_k(P)\ge\widehat g_k-2q\) on
\(\{\norm{A-P}\le q\}\).  The union bound and
Theorem~\ref{thm:grassmann} finish the proof.
\end{proof}

\section{Downstream certificates and their computability}\label{sec:downstream}

\subsection{Approximate \texorpdfstring{\(k\)}{k}-means and a Hamming ball}

The clustering estimator must use \(\widehat U\), not the unknown population
alignment or population centres.  Let
\((\widehat\Theta,\widehat X)\) be a \(K\)-means solution accompanied by a
verified \((1+\varepsilon)\)-approximation guarantee:
\[
 \norm{\widehat\Theta\widehat X-\widehat U}_F^2
 \le(1+\varepsilon)
 \min_{\Theta,X}\norm{\Theta X-\widehat U}_F^2,
\tag{4.1}\label{eq:kmeans}
\]
where each row of the membership matrix \(\Theta\) has exactly one entry equal
to one and all other entries zero.
Let \(\widehat g\) denote the corresponding labels.

Assume \(U_\star=\Theta_\star X_\star\) has \(K\ge2\) distinct rows and
\[
 \Delta=\min_{a\ne b}\norm{(X_\star)_{a\cdot}-
 (X_\star)_{b\cdot}}_2>0.
\tag{4.2}\label{eq:margin}
\]
Labels are compared by permutation-invariant Hamming distance
\[
 d_H(g,h)=\min_{\pi\in\mathfrak S_K}
 \#\{i:g(i)\ne\pi(h(i))\}.
\]
Write
\[
 D_{K,n}=\left\lfloor n\left(1-\frac1K\right)\right\rfloor,\qquad
 \mathcal B_H(h,m)=\{g:d_H(g,h)\le m\}.
\tag{4.3}\label{eq:hamming-geometry}
\]
The averaging argument over label permutations gives
\(d_H(g,h)\le D_{K,n}\) for every pair \(g,h\).  Consequently a radius
\(m\ge D_{K,n}\), not merely a radius \(m\ge n\), covers the entire quotient
label space.  For \(K=2\), its diameter is exactly \(\lfloor n/2\rfloor\).

\begin{lemma}[Deterministic approximate-\(k\)-means transfer]\label{lem:kmeans}
Let \(Q^\circ\) minimise
\(\norm{\widehat UQ-U_\star}_F\) over \(Q\in O(k)\).  If each population
cluster retains at least one row outside the bad set used for centre matching,
then
\[
 d_H(\widehat g,g_\star)
 \le\frac{16+8\varepsilon}{\Delta^2}
 \norm{\widehat UQ^\circ-U_\star}_F^2.
 \tag{4.4}\label{eq:kmeans-bound}
\]
\end{lemma}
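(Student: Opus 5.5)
The plan is to follow the Lei--Rinaldo approximate $k$-means argument, carried out entirely in the rotated frame $\widehat U Q^\circ$. Write $E=\widehat UQ^\circ-U_\star$.

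\textbf{Step 1: bound the $k$-means fit.} Right multiplication by an orthogonal $Q^\circ$ preserves Frobenius norms and maps feasible pairs $(\Theta,X)$ to $(\Theta,XQ^\circ)$. Hence $(\widehat\Theta,\widehat XQ^\circ)$ is again a $(1+\varepsilon)$-approximate solution for the target $\widehat UQ^\circ$. Set $\bar U=\widehat\Theta\widehat XQ^\circ$. Since $(\Theta_\star,X_\star)$ is feasible,
\[
 \norm{\bar U-\widehat UQ^\circ}_F^2\le(1+\varepsilon)\norm{U_\star-\widehat UQ^\circ}_F^2=(1+\varepsilon)\norm{E}_F^2 .
\]
The triangle inequality then gives
\[
 \norm{\bar U-U_\star}_F^2\le\bigl(1+\sqrt{1+\varepsilon}\bigr)^2\norm{E}_F^2\le(4+2\varepsilon)\norm{E}_F^2 ,
\]
where the last step uses $(1+\sqrt{1+\varepsilon})^2=2+\varepsilon+2\sqrt{1+\varepsilon}\le 4+2\varepsilon$, which follows from $\sqrt{1+\varepsilon}\le1+\varepsilon/2$.

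\textbf{Step 2: the bad set and Markov counting.} Define
\[
 S=\{i:\norm{\bar U_{i\cdot}-(U_\star)_{i\cdot}}_2\ge\Delta/2\}.
\]
By Markov's inequality on the row norms,
\[
 |S|\le\frac{4}{\Delta^2}\norm{\bar U-U_\star}_F^2\le\frac{16+8\varepsilon}{\Delta^2}\norm{E}_F^2 ,
\]
which is exactly the right-hand side of \eqref{eq:kmeans-bound}. It therefore remains to show $d_H(\widehat g,g_\star)\le|S|$, that is, that some single label permutation makes every row outside $S$ correctly classified.

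\textbf{Step 3: centre matching (the main obstacle).} For $i\notin S$, row $\bar U_{i\cdot}$ is the estimated centre of cluster $\widehat g(i)$, and it lies strictly within $\Delta/2$ of the population centre $(X_\star)_{g_\star(i)\cdot}$.
\begin{itemize}
\item \emph{Same population cluster $\Rightarrow$ same estimated label.} Take $i,j\notin S$ with $g_\star(i)=g_\star(j)$ and suppose $\widehat g(i)\ne\widehat g(j)$. Then two distinct estimated centres both lie in the open ball of radius $\Delta/2$ around the same population centre. That alone is not contradictory, so a counting argument is needed. By hypothesis every one of the $K$ population clusters contains a row outside $S$. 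Any estimated centre lying within $\Delta/2$ of population centre $a$ cannot also lie within $\Delta/2$ of centre $b\ne a$, since $\norm{(X_\star)_{a\cdot}-(X_\star)_{b\cdot}}_2\ge\Delta$. So the map from population clusters to the sets of estimated labels used by their good rows has pairwise disjoint images, each nonempty. There are only $K$ estimated labels, so each image is a singleton. This is where the paper's hypothesis about rows outside the bad set enters, and it is the step I expect to need the most care.
\item \emph{Different population clusters $\Rightarrow$ different estimated labels.} If $g_\star(i)\ne g_\star(j)$, the two estimated centres are within $\Delta/2$ of population centres at distance $\ge\Delta$. The strict inequality in the definition of $S$ forces the centres to differ, so the labels differ.
\end{itemize}
The resulting bijection $\pi$ agrees with $\widehat g$ on the complement of $S$, so $d_H(\widehat g,g_\star)\le|S|$. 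Combined with Step 2 this proves \eqref{eq:kmeans-bound}.
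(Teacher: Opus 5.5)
Your proof is correct and follows the paper's Appendix~A argument step for step. You rotate into the $Q^\circ$ frame, use feasibility of $U_\star$ and the triangle inequality to get the $(4+2\varepsilon)$ centre bound, count the $\Delta/2$-bad rows by Markov, and build the label permutation from the retention hypothesis. Your pigeonhole argument, that all good rows of one population cluster must share a single estimated label, spells out a step the paper only asserts (``every vertex outside $\cup_a S_a$ is then correctly matched'').
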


This is the explicit approximate-\(k\)-means lemma of
\citet{LeiRinaldo2015SpectralClusteringSBM}, written after orthogonal
alignment.  The cluster-retention condition is guaranteed whenever the
right-hand side of \eqref{eq:kmeans-bound} is below the smallest cluster size.
The estimator itself never uses \(Q^\circ\) or \(X_\star\); they appear only in
the proof.

For principal angles \(\theta_1,\ldots,\theta_k\),
\[
\min_Q\norm{\widehat UQ-U_\star}_F^2
=2\sum_{\ell=1}^k(1-\cos\theta_\ell)
\le2\sum_{\ell=1}^k\sin^2\theta_\ell
=\norm{\widehat U\widehat U^\top-U_\star U_\star^\top}_F^2
\le2k\,\dgr(\widehat U,U_\star)^2.
 \tag{4.5}\label{eq:principal-angles}
\]
This identity also corrects the invalid inequality
\(\norm{\widehat\Pi-\Pi_\star}_F^2\le
2\norm{\widehat\Pi-\Pi_\star}^2\) when \(k=2\).

\begin{theorem}[Model-envelope clustering confidence set]
\label{thm:clustering}
In addition to Theorem~\ref{thm:grassmann}, suppose that the population
envelopes
\[
 \Delta\ge\underline\Delta>0,\qquad
 n_{\min}:=\min_a\#\{i:g_\star(i)=a\}\ge\underline n_{\min}
\]
are certified before observing \(A\), and that the computed \(K\)-means
solution is accompanied by \eqref{eq:kmeans} with a verified
\(\varepsilon\le\bar\varepsilon\).  Define
\[
 M_{n,\alpha}
 =\frac{(32+16\bar\varepsilon)k\,r^2}{\underline\Delta^2},
 \qquad m_{n,\alpha}=\lfloor M_{n,\alpha}\rfloor.
\tag{4.6}\label{eq:hamming-radius}
\]
If \(M_{n,\alpha}<\underline n_{\min}\), then
\[
 \PP\{g_\star\in\mathcal B_H(\widehat g,m_{n,\alpha})\}
 \ge1-\alpha.
\tag{4.7}\label{eq:hamming-coverage}
\]
Moreover \(m_{n,\alpha}<D_{K,n}\), so the Hamming ball is proper.  If the
retention inequality or the algorithmic approximation guarantee is not
certified, the protocol returns ``no clustering certificate.''
\end{theorem}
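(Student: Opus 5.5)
The plan is to work on the single event \(\mathcal E\) from Theorem~\ref{thm:grassmann}, namely \(\{\norm{A-P}\le q_{n,\alpha}(\bar v)\}\), which has probability at least \(1-\alpha\) by Lemma~\ref{lem:bernstein}. On \(\mathcal E\) we have \(\dgr(\widehat U,U_\star)\le r\). Everything afterwards is deterministic. Feeding \(\dgr(\widehat U,U_\star)\le r\) into \eqref{eq:principal-angles} gives
\[
\norm{\widehat UQ^\circ-U_\star}_F^2=\min_Q\norm{\widehat UQ-U_\star}_F^2\le 2k\,r^2 .
\]
Here \(Q^\circ\) is the Procrustes minimiser, so the first equality is just its definition.

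Next I verify the cluster-retention hypothesis of Lemma~\ref{lem:kmeans}. The right-hand side of \eqref{eq:kmeans-bound} is bounded using the certified envelopes \(\varepsilon\le\bar\varepsilon\) and \(\Delta\ge\underline\Delta\):
\[
\frac{16+8\varepsilon}{\Delta^2}\norm{\widehat UQ^\circ-U_\star}_F^2
\le\frac{(16+8\bar\varepsilon)\,2k\,r^2}{\underline\Delta^2}=M_{n,\alpha}.
\]
The bad set in the Lei--Rinaldo argument consists of the rows \(i\) with \(\norm{(\widehat UQ^\circ)_{i\cdot}-(U_\star)_{i\cdot}}\ge\Delta/2\) (or the analogous threshold). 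Its size is bounded by a Markov-type count by the same quantity, hence by \(M_{n,\alpha}\). Since \(M_{n,\alpha}<\underline n_{\min}\le n_{\min}\), every population cluster keeps a row outside the bad set.

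This step is the main obstacle. The retention condition is stated in terms of the bad set, so I must check that the bad-set cardinality bound in the cited lemma is indeed dominated by the right-hand side of \eqref{eq:kmeans-bound}, i.e. that the lemma's constant \(16+8\varepsilon\) already absorbs the bad-set count. The remark following Lemma~\ref{lem:kmeans} asserts exactly this, and I would invoke it. Lemma~\ref{lem:kmeans} then gives \(d_H(\widehat g,g_\star)\le M_{n,\alpha}\). Because \(d_H\) is integer-valued, this yields \(d_H\le\lfloor M_{n,\alpha}\rfloor=m_{n,\alpha}\), so \(g_\star\in\mathcal B_H(\widehat g,m_{n,\alpha})\) on \(\mathcal E\), which proves \eqref{eq:hamming-coverage}.

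For properness, I need \(m_{n,\alpha}<D_{K,n}\). The \(K\) clusters partition \([n]\), so \(K n_{\min}\le n\), giving \(n_{\min}\le n/K\). Since \(K\ge2\), we have \(n/K\le n(1-1/K)\). Hence \(m_{n,\alpha}\le M_{n,\alpha}<\underline n_{\min}\le n_{\min}\le n/K\le n(1-1/K)\). Because \(m_{n,\alpha}\) is an integer strictly below \(n(1-1/K)\), it is at most \(\lceil n(1-1/K)\rceil-1\le\lfloor n(1-1/K)\rfloor=D_{K,n}\). To get a strict inequality I use the sharper chain \(m_{n,\alpha}<n_{\min}\le\lfloor n/K\rfloor\le D_{K,n}\), which holds because \(n_{\min}\) is an integer. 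The final sentence of the theorem is a protocol statement: when the hypotheses are uncertified, nothing is claimed.
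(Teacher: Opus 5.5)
Your proposal is correct and follows the paper's proof essentially step for step: on the event of Theorem~\ref{thm:grassmann} you bound the aligned Frobenius error by \(2kr^2\) via \eqref{eq:principal-angles}, use the bad-row count \eqref{eq:bad-count} together with the envelopes \(\Delta\ge\underline\Delta\) and \(\varepsilon\le\bar\varepsilon\) to get both cluster retention and \(d_H(\widehat g,g_\star)\le M_{n,\alpha}\), round down by integrality, and obtain properness from \(m_{n,\alpha}<n_{\min}\le\lfloor n/K\rfloor\le D_{K,n}\). Your first properness chain only gives \(m_{n,\alpha}\le D_{K,n}\); the strict inequality comes from the second, integer-based chain, which is exactly the one the paper uses.
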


\begin{proof}
On the event of Theorem~\ref{thm:grassmann},
Lemma~\ref{lem:kmeans} and \eqref{eq:principal-angles} bound both the bad-row
count and \(d_H(\widehat g,g_\star)\) by \(M_{n,\alpha}\).  The strict
inequality \(M_{n,\alpha}<\underline n_{\min}\le n_{\min}\) guarantees that
every population cluster retains a good row, so the centre-matching
permutation exists.  Since Hamming distance is integer,
\(d_H\le\lfloor M_{n,\alpha}\rfloor\).  Finally,
\(\underline n_{\min}\le n_{\min}\le\lfloor n/K\rfloor\le D_{K,n}\),
proving properness.
\end{proof}

The distinction between a computable label vector and a computable
\emph{certificate} matters.  Lloyd's heuristic, including the implementation
used descriptively in Section~\ref{sec:simulation}, does not ordinarily return
a verified approximation factor.  We therefore do not attach the theorem's
confidence label to those empirical partitions.  Exact recovery also requires
a genuine rowwise bound; the \(\ell_p\) theory of
\citet{AbbeFanWang2022LpPCASpectralClustering} provides such results under
additional model-specific assumptions.  Those assumptions are not silently
imported into the present general model.

\subsection{Degree centrality with an observed selection margin}

Let \(d_i(P)=\sum_{j\ne i}P_{ij}\) and \(D_i=d_i(A)\).
If
\[
 v_{\deg}:=\max_i\sum_{j\ne i}P_{ij}(1-P_{ij})\le\bar v_{\deg},
\]
define
\[
 h_{n,\alpha}(\bar v_{\deg})
 =\sqrt{2\bar v_{\deg}\log(2n/\alpha)}
 +\frac23\log(2n/\alpha).
 \tag{4.8}\label{eq:degree-half}
\]

\begin{theorem}[Simultaneous degree bands and top-\(m\) certificate]
\label{thm:degree}
With probability at least \(1-\alpha\),
\[
 d_i(P)\in[D_i-h_{n,\alpha},D_i+h_{n,\alpha}]
 \quad\text{for every }i.
 \tag{4.9}\label{eq:degree-bands}
\]
Let \(\widehat S_m\) be the \(m\) largest observed degrees and
\[
 \widehat\Gamma_m
 =\min_{i\in\widehat S_m}D_i-\max_{j\notin\widehat S_m}D_j.
 \tag{4.10}\label{eq:observed-margin}
\]
If \(\widehat\Gamma_m>2h_{n,\alpha}\), then, on the same event,
\(\widehat S_m\) is the unique population top-\(m\) degree set.
\end{theorem}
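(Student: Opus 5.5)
The plan has two parts. First we prove the simultaneous bands by a scalar Bernstein bound for each vertex followed by a union bound. Then we derive the selection statement deterministically on that event.

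For the bands, fix \(i\) and write \(D_i-d_i(P)=\sum_{j\ne i}(A_{ij}-P_{ij})\). This is a sum of independent centred variables, because within row \(i\) the entries \(A_{ij}\), \(j\ne i\), are distinct edge variables. Each summand is bounded by one in absolute value, and the variance of the sum is \(\sum_{j\ne i}P_{ij}(1-P_{ij})\le v_{\deg}\le\bar v_{\deg}\). The two-sided scalar Bernstein inequality gives
\[
 \PP\{|D_i-d_i(P)|\ge t\}\le 2\exp\!\left\{-\frac{t^2}{2(\bar v_{\deg}+t/3)}\right\}.
\]
With \(t=h_{n,\alpha}\) and \(x=\log(2n/\alpha)\), the same algebra as in the proof of Lemma~\ref{lem:bernstein} shows the exponent is at least \(x\). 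Hence each tail is at most \(\alpha/n\). A union bound over the \(n\) vertices yields \eqref{eq:degree-bands} with probability at least \(1-\alpha\).

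For the exponent algebra, put \(t=a+b\) with \(a=\sqrt{2\bar v x}\) and \(b=2x/3\). We need \(t^2\ge 2x\bar v+2xt/3\). The right-hand side equals \(a^2+bt\), while the left-hand side is \(a^2+2ab+b^2\ge a^2+ab+b^2=a^2+bt\). This step is routine.

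An alternative is to note that the degree vector is \((A-P)\1\) and invoke Lemma~\ref{lem:bernstein} directly, but this loses a factor \(\sqrt n\). The rowwise scalar route is the intended one, and its \(2n\) union-bound factor matches \(\log(2n/\alpha)\) in \eqref{eq:degree-half}.

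For the selection statement, work on the band event. Take any \(i\in\widehat S_m\) and \(j\notin\widehat S_m\). Then
\[
 d_i(P)\ge D_i-h\ge \min_{\widehat S_m}D-h,\qquad d_j(P)\le \max_{\widehat S_m^c}D+h.
\]
Subtracting gives \(d_i(P)-d_j(P)\ge\widehat\Gamma_m-2h>0\). So every vertex of \(\widehat S_m\) has strictly larger population degree than every vertex outside it.

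Uniqueness follows from this strict separation. Any population top-\(m\) set \(S\) with \(S\ne\widehat S_m\) would contain some \(j\notin\widehat S_m\) and omit some \(i\in\widehat S_m\). Since \(d_i(P)>d_j(P)\), swapping \(j\) for \(i\) contradicts \(S\) consisting of the \(m\) largest values. The strict inequalities also rule out ties at the selection boundary, so the population top-\(m\) set is well defined.

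The main point needing care is the first step: confirming that the row sums really are sums of independent terms, which holds because the zero diagonal excludes \(A_{ii}\). The rest is direct.
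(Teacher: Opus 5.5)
Your proposal is correct and follows the paper's own proof: a scalar Bernstein bound for each row sum with a union bound over the \(2n\) one-sided tails gives the bands, and on that event \(d_i(P)-d_j(P)\ge D_i-D_j-2h_{n,\alpha}\ge\widehat\Gamma_m-2h_{n,\alpha}>0\) for \(i\in\widehat S_m\), \(j\notin\widehat S_m\). You also write out the exponent algebra and the swap argument for uniqueness of the top-\(m\) set, both of which the paper leaves implicit.
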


\begin{proof}
Scalar Bernstein and a union bound give \eqref{eq:degree-bands}.  For
\(i\in\widehat S_m\), \(j\notin\widehat S_m\),
\[
 d_i(P)-d_j(P)\ge D_i-D_j-2h_{n,\alpha}>0.
\]
\end{proof}

The condition uses an observed margin.  It therefore produces an actual
decision rule: certify if \(\widehat\Gamma_m>2h\), otherwise decline.
For a fully observable version, choose
\(\alpha_d+\alpha_s=\alpha\) and substitute
\[
 \bar v_{\deg}(A)
 =\min\{\bar d(A,\alpha_d),(n-1)/4\}
\]
into \(h_{n,\alpha_s}\).  Lemma~\ref{lem:degree-ucb}, the deterministic
Bernoulli-variance bound, and a union bound give simultaneous degree-band and
selection coverage at least \(1-\alpha\).

\subsection{A corrected normalized-Katz extension}

For \(s=\1/\sqrt n\), define normalized Katz centrality
\[
 c_\beta(M)=(I-\beta M)^{-1}s-s.
 \tag{4.11}\label{eq:katz}
\]
Normalising the seed is essential when operator norm controls the
perturbation.  Without it, the Lipschitz modulus below contains the previously
omitted factor \(\sqrt n\).

\begin{proposition}[Katz band]\label{prop:katz}
Suppose \(\beta>0\), \(\norm{M},\norm{M'}\le R\), and
\(\beta R\le\tau<1\).  Then
\[
 \norm{c_\beta(M)-c_\beta(M')}_\infty
 \le\frac{\beta}{(1-\tau)^2}\norm{M-M'}.
 \tag{4.12}\label{eq:katz-lip}
\]
Consequently, if \(\norm{P}\le R_0\), \(q\) is a valid operator quantile and
\(\beta(R_0+q)\le\tau\), then simultaneous bands of half-width
\[
 h_K=\frac{\beta q}{(1-\tau)^2}
 \tag{4.13}\label{eq:katz-half}
\]
hold with the same coverage.  An observed top-\(m\) Katz set is certified
when its observed boundary gap exceeds \(2h_K\).
\end{proposition}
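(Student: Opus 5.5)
The plan is to use the resolvent identity for the Lipschitz bound (4.12) and then transfer it to $A$ and $P$ on the event of Lemma~\ref{lem:bernstein}. Write $R_\beta(M)=(I-\beta M)^{-1}$. Because $\norm{\beta M}\le\beta R\le\tau<1$, the Neumann series converges and gives $\norm{R_\beta(M)}\le 1/(1-\tau)$; the same holds for $M'$. The seed cancels in the difference, so
\[
 c_\beta(M)-c_\beta(M')=\bigl(R_\beta(M)-R_\beta(M')\bigr)s
 =\beta\,R_\beta(M)(M-M')R_\beta(M')\,s ,
\]
where the second form is the second resolvent identity $X^{-1}-Y^{-1}=X^{-1}(Y-X)Y^{-1}$ with $X=I-\beta M$ and $Y=I-\beta M'$. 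Next bound the sup-norm by the Euclidean norm and use $\norm{s}_2=1$:
\[
 \norm{c_\beta(M)-c_\beta(M')}_\infty\le\norm{\cdot}_2\le\beta\,\norm{R_\beta(M)}\,\norm{M-M'}\,\norm{R_\beta(M')}\le\frac{\beta}{(1-\tau)^2}\norm{M-M'} .
\]
This is where the normalisation $s=\1/\sqrt n$ matters. With the unnormalised seed $\1$ we would have $\norm{\1}_2=\sqrt n$, and that factor would reappear in the bound.

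For the band, take $M=A$ and $M'=P$, and work on the event $\{\norm{A-P}\le q\}$, which has probability at least $1-\alpha$ by Lemma~\ref{lem:bernstein}, or by its observable variant as in Corollary~\ref{cor:observable}.

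1. On this event, $\norm{A}\le\norm{P}+q\le R_0+q$ by the triangle inequality. So both matrices satisfy the hypotheses with $R=R_0+q$, and $\beta R\le\tau$.
2. Applying (4.12) then gives $|c_\beta(A)_i-c_\beta(P)_i|\le h_K$ for every $i$ simultaneously, with the same coverage $1-\alpha$.

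The top-$m$ selection claim follows exactly as in Theorem~\ref{thm:degree}. Take $i$ in the observed top-$m$ Katz set and $j$ outside it. Then
\[
 c_\beta(P)_i-c_\beta(P)_j\ge c_\beta(A)_i-c_\beta(A)_j-2h_K\ge\widehat\Gamma^{\rm Katz}_m-2h_K>0 ,
\]
so the observed set is the unique population top-$m$ set.

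The only delicate point is the requirement $\beta(R_0+q)\le\tau$. It is needed because the hypothesis $\norm{A}\le R$ must hold for the random matrix $A$, not just for $P$. It is guaranteed only on the concentration event, which is why the coverage is inherited from that event rather than holding deterministically. Apart from this, the argument is routine.
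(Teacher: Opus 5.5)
Your proof is correct and follows the paper's own argument: the second resolvent identity, the Neumann bound $\norm{(I-\beta M)^{-1}}\le 1/(1-\tau)$, then $\norm{s}_2=1$ and $\norm{x}_\infty\le\norm{x}_2$, applied with $R=R_0+q$ on the event $\{\norm{A-P}\le q\}$. Your explicit top-$m$ selection step copies the argument of Theorem~\ref{thm:degree}, which the paper leaves implicit.
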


\begin{proof}
The resolvent identity gives
\[
 c_\beta(M)-c_\beta(M')
 =\beta(I-\beta M)^{-1}(M-M')
 (I-\beta M')^{-1}s.
\]
Take the Euclidean norm, use \(\norm{s}_2=1\) and then
\(\norm{x}_\infty\le\norm{x}_2\).  On
\(\{\norm{A-P}\le q\}\), both \(A\) and \(P\) have norm at most
\(R_0+q\).
\end{proof}

The resolvent domain can also be checked from the observed graph: if a valid
deterministic operator quantile \(q\) is available and
\(\beta(\norm{A}+q)\le\tau<1\), then on
\(\{\norm{A-P}\le q\}\) both \(\norm A\) and \(\norm P\) are at most
\(\norm A+q\), and the same half-width applies.  A data-dependent \(q\) from
Corollary~\ref{cor:observable} requires its stated split error budget.

\section{Operating envelopes and corrected examples}\label{sec:envelope}

\subsection{Balanced two-block SBM}

Let \(n=2m\), with two equal blocks, within-block probability \(p\) and
between-block probability \(q<p\).  With the diagonal set to zero,
\[
 \lambda_1(P)=(m-1)p+mq,\qquad
 \lambda_2(P)=(m-1)p-mq,\qquad
 \lambda_3(P)=\cdots=\lambda_n(P)=-p.
\tag{5.1}\label{eq:sbm-spectrum}
\]
Thus the top-two boundary gap, maximum expected degree and exact variance
proxy are
\[
\begin{split}
 g_2(P)&=m(p-q),\\
 d_{\max}&=(m-1)p+mq,\\
 v(P)&=(m-1)p(1-p)+mq(1-q).
\end{split}
\tag{5.2}\label{eq:sbm-inputs}
\]
Notice that \(\lambda_1-\lambda_2\) is an internal gap and is irrelevant to
the two-dimensional subspace.

For the previously used values \(n=200,p=0.30,q=0.10,\alpha=0.05\),
\[
 v(P)=29.79,\quad d_{\max}=39.7,\quad g_2(P)=20.
\]
Equation~\eqref{eq:q} gives
\[
 q_{200,0.05}(v)=29.131,\qquad
 r_{\rm raw}=2.913.
\tag{5.3}\label{eq:old-corrected}
\]
Using \(d_{\max}\) instead gives \(r_{\rm raw}=3.270\).  Both exceed one,
so the confidence region is the entire Grassmannian.  The formula with the
dimension factor omitted and an unspecified common constant \(C=1\) would
have reported \(1.579\), which is already vacuous and is not a valid numerical
matrix-Bernstein calibration.

\begin{figure}[t]
\centering
\includegraphics[width=0.72\textwidth]{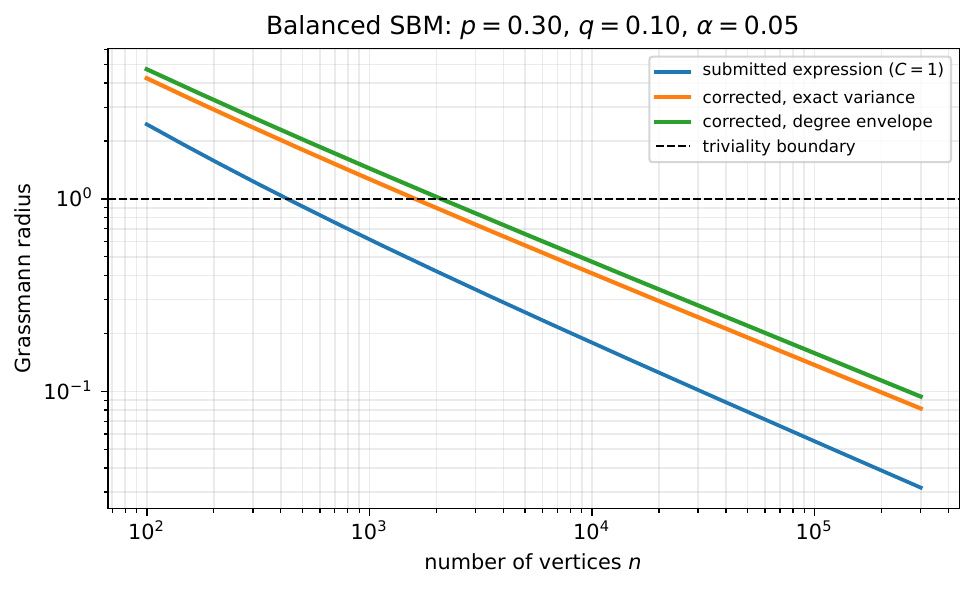}
\caption{Submitted and corrected radii for the balanced SBM.  The corrected
curves retain the dimension factor and numerical constants.  Radius one is
the diameter of the Grassmannian in projector norm.}
\label{fig:radius}
\end{figure}

Figure~\ref{fig:radius} follows the same \(p,q,\alpha\) while varying \(n\).
With the exact variance, the first even \(n\) with radius below one is
\(n=\ExactRadiusThreshold\); the degree-envelope version first crosses at
\(n=\DegreeRadiusThreshold\).  The population eigenbasis has rows
\((1/\sqrt n,\pm1/\sqrt n)\), so \(\Delta^2=4/n\).  For exact \(k\)-means
(\(\varepsilon=0\)), \eqref{eq:hamming-radius} reduces to
\[
 M_{n,\alpha}=16nr^2,\qquad m_{n,\alpha}=\lfloor16nr^2\rfloor.
\tag{5.4}\label{eq:balanced-hamming}
\]
Because the smallest cluster size and quotient-label-space diameter are both
\(n/2\), the retention and proper-ball condition is
\(16nr^2<n/2\).  It first holds at \(n=\HammingThreshold\), found by an exact search
over even integers.  The former comparison with \(n\) would be wrong after
label permutations are identified.  This slow transition is the structural
limitation of mean-square propagation; it is not hidden by asymptotic notation.

\begin{figure}[t]
\centering
\includegraphics[width=0.76\textwidth]{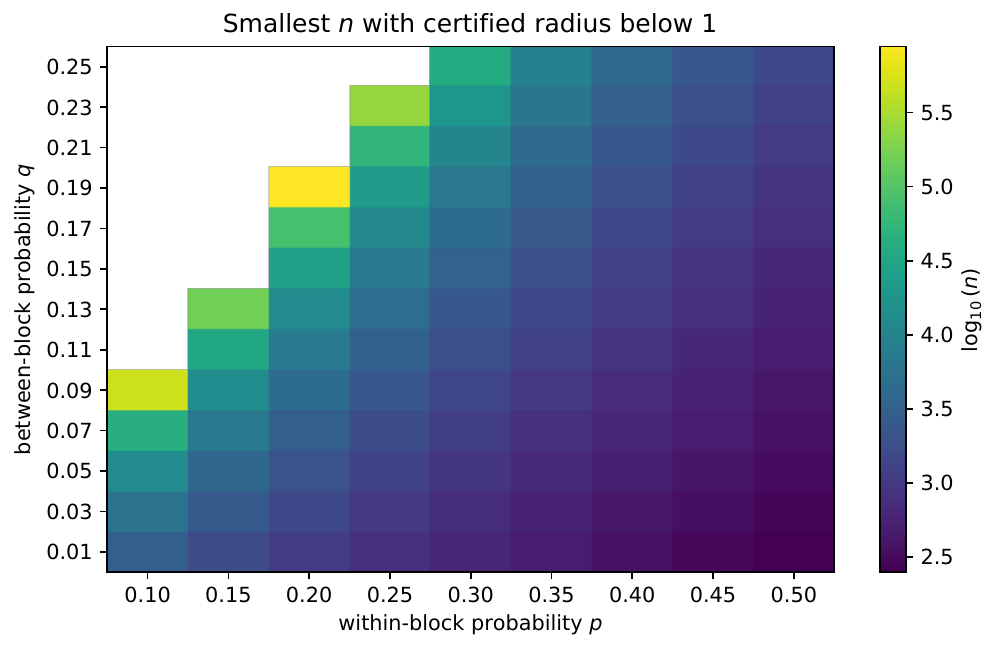}
\caption{Exact smallest even \(n\) for which the exact-variance Grassmann
radius is below one over a grid of balanced-SBM probabilities.  Blank cells
have \(q\ge p\) or do not cross by \(n=1{,}000{,}000\).}
\label{fig:regime}
\end{figure}

Figure~\ref{fig:regime} maps the operating envelope over \(p\) and \(q\).
Strong within/between separation produces a usable subspace certificate at
moderate \(n\), whereas \(p\approx q\) pushes the crossing far to the right.

\subsection{Centrality examples with genuine margins}

Consider \(n=2000\), two equal blocks and
\[
 B=\begin{pmatrix}0.60&0.02\\0.02&0.10\end{pmatrix}.
\tag{5.5}\label{eq:unequal-B}
\]
The population expected degrees are \(619.4\) and \(119.9\), giving margin
\(\UnequalDegreeMargin\).  The simultaneous Bernstein half-width is
\(\UnequalDegreeHalf\).  In the fixed reproducible draw, the observed
boundary gap between the first \(1000\) and remaining vertices is
\(\UnequalObservedGap>2(\UnequalDegreeHalf)\); Theorem~\ref{thm:degree}
therefore certifies the selected high-centrality block.  The top set agrees
with the first block for every vertex.

The same model also corrects the zero-margin Katz example.  With
\(\beta=1/(4\rho(P))\), the two unnormalised population Katz scores are
\(0.34446\) and \(0.05510\), so the population margin is \(0.28935\), not
zero.  The general global operator band remains too conservative to certify
this moderately noisy model, and the protocol says so.  A separate
near-deterministic operating example with \(n=5000\),
\(p_{11}=0.999999,p_{22}=10^{-6},q=10^{-7}\) has normalized-Katz population
margin more than \(\KatzMarginRatio\) times \(2h_K\); in that deliberately
extreme regime the observed-gap certificate fires on the coverage event.
Reporting both cases exposes the narrow operating envelope of the global Katz
bound.

\subsection{Degree-ranking coverage and certificate firing}

To validate the centrality rule separately from the eigenspace study, we
generated \(1000\) graphs from the unequal two-block model
\eqref{eq:unequal-B} at \(n=500\), with selection size \(250\).  The variance
envelope and selection size were fixed before simulation.  Table
\ref{tab:centrality-sim} reports simultaneous band coverage, the frequency
with which the observed-gap certificate fired, and exact recovery of the
population top set.  Every fired certificate was correct
(\(\CentralityCorrectGivenFire\)); the firing rate was
\(\CentralityFiring\), so the design produces both ``certificate'' and
``no certificate'' outcomes rather than only a favourable case.

\begin{table}[t]
\centering
\caption{Degree-centrality validation at \(n=500\) over \(1000\)
replications.  Intervals are exact two-sided \(95\%\) Clopper--Pearson
intervals for Monte Carlo proportions.}
\label{tab:centrality-sim}
\begin{tabular}{lrrr}
\toprule
Diagnostic&Count&Rate&Monte Carlo interval\\
\midrule
Simultaneous degree band&1000/1000&1.000&[0.996, 1.000]\\
Observed-gap certificate fired&711/1000&0.711&[0.682, 0.739]\\
Selected set exactly correct&1000/1000&1.000&[0.996, 1.000]\\
\bottomrule
\end{tabular}
\end{table}

\section{Simulation and bootstrap comparison}\label{sec:simulation}

\subsection{Design}

We considered \(n\in\{200,500,1000,2000\}\) and three balanced-SBM pairs:
\[
(p,q)\in\{(0.30,0.10),(0.20,0.05),(0.30,0.20)\}.
\]
For each of the \(12\) cells we generated \(B=1000\) graphs using seed
\(2026072901\).  We computed the exact-variance radius, realised
\(\dgr(\widehat U,U_\star)\), its empirical \(0.95\) quantile, approximate
\(k\)-means labels and permutation-invariant Hamming error.  The radius is
reported as \(\min(1,r_{\rm raw})\), while the raw value and both
operating flags remain in the supplied CSV file.  The \(H\) flag is the
model-envelope condition for an ideal globally optimal \(k\)-means solution;
it is not attached to the descriptive Lloyd output.

\begin{figure}[t]
\centering
\includegraphics[width=\textwidth]{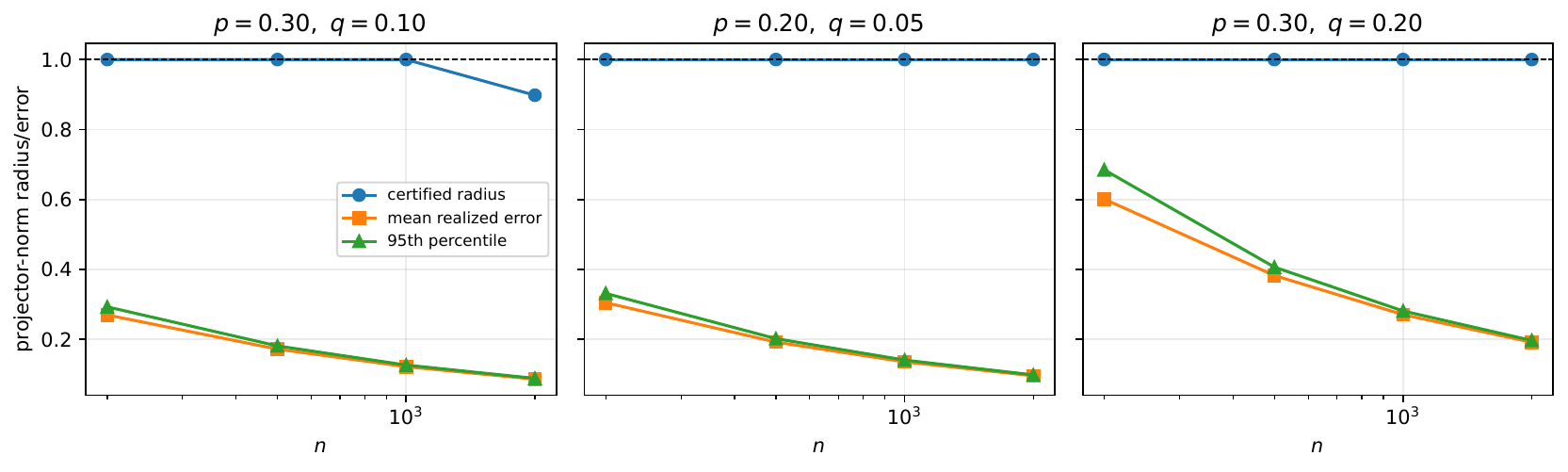}
\caption{Certified radius, mean realised Grassmann error and empirical
95th percentile across \(1000\) replications per point.  A certified radius
equal to one has trivial coverage.}
\label{fig:coverage}
\end{figure}

\begin{table}[t]
\centering
\caption{Finite-sample operating envelope.  Each row uses \(1000\)
replications.  \(S\) indicates a nontrivial subspace ball and \(H\) the
availability of the model-envelope Hamming certificate for ideal exact
\(k\)-means; ``cov.'' is empirical subspace coverage, \(\bar e\) the mean
realised Grassmann error, \(e_{.95}\) its 95th percentile, and ``exact'' the
fraction of zero-Hamming descriptive Lloyd outputs.  The reported radius is
\(\min(1,r_{\rm raw})\).}
\label{tab:simulation}
\footnotesize
\setlength{\tabcolsep}{3.5pt}
\begin{tabular}{rrrrccrrrr}
\toprule
\(n\)&\(p\)&\(q\)&\(r_{\rm raw}\)&\(S\)&\(H\)&cov.&
\(\bar e\)&\(e_{.95}\)&exact\\
\midrule
200&0.30&0.10&2.913&no&no&1.000&0.270&0.293&0.959\\
200&0.20&0.05&3.364&no&no&1.000&0.305&0.332&0.860\\
200&0.30&0.20&6.341&no&no&1.000&0.601&0.685&0.000\\
500&0.30&0.10&1.804&no&no&1.000&0.172&0.182&1.000\\
500&0.20&0.05&2.059&no&no&1.000&0.192&0.202&1.000\\
500&0.30&0.20&3.949&no&no&1.000&0.383&0.407&0.014\\
1000&0.30&0.10&1.268&no&no&1.000&0.122&0.127&1.000\\
1000&0.20&0.05&1.438&no&no&1.000&0.136&0.141&1.000\\
1000&0.30&0.20&2.786&no&no&1.000&0.271&0.281&0.810\\
2000&0.30&0.10&0.898&yes&no&1.000&0.086&0.089&1.000\\
2000&0.20&0.05&1.013&no&no&1.000&0.096&0.099&1.000\\
2000&0.30&0.20&1.978&no&no&1.000&0.192&0.197&1.000\\
\bottomrule
\end{tabular}
\end{table}

Figure~\ref{fig:coverage} and Table~\ref{tab:simulation} show the central
empirical fact.  The realised error is much smaller than the uniform analytic
radius.  At \(n=200,p=0.30,q=0.10\), for example, the mean error is about
\(0.27\), in agreement with the independent editorial reconstruction, while
the corrected raw radius is \(2.913\).  Thus empirical algorithmic success
does not make the confidence ball informative.  Only one of the \(12\) cells
(\(8.3\%\)) has \(S=\mathrm{yes}\), and none has \(H=\mathrm{yes}\).
All subspace balls use the same projector metric on \(\Gr(2,n)\); at fixed
\(n\), their metric-ball volume is monotone in radius.  We therefore report
radius as the transparent size index rather than claiming a closed-form
geodesic volume.

\subsection{Bootstrap benchmark}

At \(n=200,p=0.30,q=0.10\) we additionally used \(100\) outer graphs and
\(100\) bootstrap graphs within each outer replication.  We compared:
\begin{enumerate}[leftmargin=18pt,label=(\roman*)]
\item the analytic certificate;
\item a plug-in two-block SBM bootstrap fitted after spectral \(k\)-means; and
\item a simplified ASE-\(k\)-nearest-neighbour probability smoother with
\(k=20\), based on the construction studied by
\citet{DilworthDavisLawson2025ValidBootstrapsNetworkEmbeddings}.
\end{enumerate}
For each bootstrap, the radius is the conditional \(0.95\) quantile of the
Grassmann displacement from the observed embedding.

\begin{table}[t]
\centering
\caption{Bootstrap comparison at \(n=200,p=0.30,q=0.10\).
Bootstrap radii are not presented as finite-sample certificates; the table
measures their empirical coverage and size.}
\label{tab:bootstrap}
\begin{tabular}{lrrr}
\toprule
Method&Empirical coverage&Mean radius&Median radius\\
\midrule
Analytic certificate&1.000&1.000&1.000\\
Plug-in SBM bootstrap&1.000&0.402&0.402\\
Simplified ASE-$k$NN bootstrap&0.870&0.288&0.287\\
\bottomrule
\end{tabular}
\end{table}

The comparison has a specific interpretation.  The analytic method protects
coverage by returning the whole Grassmannian in this cell.  The bootstrap
methods return substantially smaller regions, but their coverage depends on
the probability-matrix estimator.  The plug-in SBM bootstrap covered all
\(100\) outer targets with mean radius \(0.402\), whereas the smaller
simplified ASE-\(k\)NN radius averaged \(0.288\) and covered only \(0.870\).
We did not implement that paper's exchangeability validation step and make no
claim that this simplified comparator is its full procedure.  This is not
a contest in which the smallest radius automatically wins: radius and
coverage must be judged jointly, and neither bootstrap number is promoted to
an exact finite-sample guarantee here.

\subsection{Monte Carlo precision}

The simulation counts also quantify their own numerical uncertainty.  In
each primary cell the analytic ball covered all \(1000\) realised targets;
the two-sided \(95\%\) Clopper--Pearson interval for a \(1000/1000\) count is
\([0.9963,1]\).  In the bootstrap experiment, \(100/100\) plug-in-SBM
coverages correspond to \([0.9638,1]\), while \(87/100\) simplified
ASE-\(k\)NN
coverages correspond to \([0.7880,0.9289]\).  These intervals describe Monte
Carlo error only.  They do not convert an empirical bootstrap comparison
into a theorem, but they show that the observed simplified ASE-\(k\)NN
shortfall is
larger than simulation noise at the nominal \(0.95\) level.

\section{Real-network diagnostic}\label{sec:real}

We applied the fully observable protocol to the unweighted Zachary
karate-club network, which has \(34\) vertices, \(78\) edges and a recorded
two-faction split \citep{Zachary1977KarateClub}.  Spectral \(k\)-means makes
\(\KarateHamming\) error after label permutation, giving agreement
\(\KarateAccuracy\).  This descriptive accuracy is high.

The inferential diagnostic is different.  Both diagnostics use a two-part
error budget: the expected-degree envelope receives \(0.025\), and the
corresponding matrix or scalar tail receives \(0.025\).  The boundary gap
is \(\KarateBoundaryGap\), while the split-level operator quantile is
\(\KarateOperatorQuantile\).  Consequently the Weyl lower bound
\(\widehat g_2-2q=\KarateGapLower\) is nonpositive and
Corollary~\ref{cor:observable} returns ``no gap certificate.''  For degree
centrality, the largest and second-largest observed degrees are \(17\) and
\(16\); the observed margin \(1\) is below twice the simultaneous half-width
\(\KarateDegreeHalf\), so the top-one degree selection is also not certified.

\begin{figure}[t]
\centering
\includegraphics[width=0.58\textwidth]{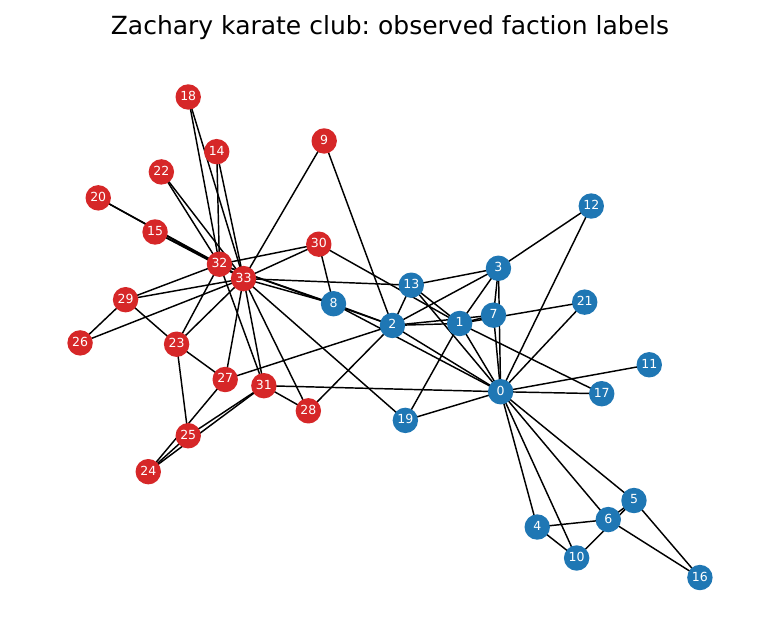}
\caption{Zachary karate-club network with the recorded post-fission faction
labels.  The spectral partition has high descriptive agreement, but the
finite-sample protocol correctly returns no nontrivial certificate.}
\label{fig:karate}
\end{figure}

This example demonstrates why ``no certificate'' is a useful output.
Refusing to certify a small network does not assert that the partition is
wrong.  It says that this distribution-free envelope, from this single
network, cannot support a nontrivial uniform confidence claim.  A parametric
SBM analysis or a validated network bootstrap could be more informative, but
would answer a question conditional on additional assumptions.

\section{Why a gap condition cannot be removed}\label{sec:lower}

The earlier claim based on two distinct \(P_0,P_1\) inducing exactly the same
Bernoulli graph law is impossible: under \eqref{eq:model},
\(P_{ij}=\E_P A_{ij}\), so equality of laws implies \(P_0=P_1\).  The correct
finite-sample obstruction uses nearby, not identical, laws.

\begin{theorem}[Two-point diameter lower bound]\label{thm:lecam}
Let \(P_0,P_1\) induce graph laws \(\mathbb P_0,\mathbb P_1\) and targets
\(U_0,U_1\in\Gr(k,n)\) with \(\dgr(U_0,U_1)\ge\delta\).  If a random set
\(\widehat{\mathcal C}(A)\) satisfies
\[
\mathbb P_j\{U_j\in\widehat{\mathcal C}(A)\}\ge1-\alpha,
\qquad j=0,1,
\]
then
\[
\mathbb P_0\{\diam(\widehat{\mathcal C})\ge\delta\}
\ge1-2\alpha-\TV(\mathbb P_0,\mathbb P_1).
\tag{8.1}\label{eq:lecam}
\]
\end{theorem}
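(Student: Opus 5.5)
The plan is the standard Le Cam two-point argument. Let \(E\) be the event \(\{U_0\in\widehat{\mathcal C}(A)\}\cap\{U_1\in\widehat{\mathcal C}(A)\}\). On \(E\) the set contains two points at projector distance at least \(\delta\). Hence \(\diam(\widehat{\mathcal C})\ge\dgr(U_0,U_1)\ge\delta\), with the diameter taken in the metric \(\dgr\) of \eqref{eq:grassmann}. This gives the deterministic inclusion \(E\subseteq\{\diam(\widehat{\mathcal C})\ge\delta\}\). It therefore suffices to bound \(\mathbb P_0(E)\) from below.

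Write \(E=E_0\cap E_1\) with \(E_j=\{U_j\in\widehat{\mathcal C}(A)\}\). By the union bound,
\[
\mathbb P_0(E)\ge1-\mathbb P_0(E_0^c)-\mathbb P_0(E_1^c).
\]
The coverage hypothesis under \(\mathbb P_0\) gives \(\mathbb P_0(E_0^c)\le\alpha\). For the second term we transfer from \(\mathbb P_1\) to \(\mathbb P_0\). Since \(E_1^c\) is an event determined by \(A\) (a measurable function of the graph), the definition of total variation gives
\[
\mathbb P_0(E_1^c)\le\mathbb P_1(E_1^c)+\TV(\mathbb P_0,\mathbb P_1)\le\alpha+\TV(\mathbb P_0,\mathbb P_1).
\]
Combining the three displays yields \eqref{eq:lecam}.

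The only point needing care is measurability and whether \(\widehat{\mathcal C}\) may be randomised. The events \(E_j\) and \(\{\diam\ge\delta\}\) must be measurable functions of \(A\), plus any auxiliary randomisation. If \(\widehat{\mathcal C}\) uses external randomisation \(\xi\) independent of \(A\), I would apply the TV step to the joint laws \(\mathbb P_j\otimes\mathcal L(\xi)\). Their total variation equals \(\TV(\mathbb P_0,\mathbb P_1)\), so the argument is unchanged.

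The inclusion \(E\subseteq\{\diam\ge\delta\}\) requires no structure of \(\Gr(k,n)\) beyond \(\dgr\) being the metric in which the diameter is measured. I do not expect a substantive obstacle. The practical content, namely choosing nearby \(P_0,P_1\) with small TV (for example via Pinsker and a KL sum over edges) but separated eigenspaces when the gap is small, belongs to the application of the theorem rather than its proof.
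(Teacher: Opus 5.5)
Your proof is correct and follows the paper's argument exactly: you transfer the coverage of \(U_1\) from \(\mathbb P_1\) to \(\mathbb P_0\) at a cost of \(\TV(\mathbb P_0,\mathbb P_1)\), intersect with \(\{U_0\in\widehat{\mathcal C}\}\) via the union bound, and note that on the intersection the diameter is at least \(\dgr(U_0,U_1)\ge\delta\). Your remark on external randomisation, handled through the product laws, is a harmless refinement that the paper leaves implicit.
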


\begin{proof}
Total variation implies
\(\mathbb P_0\{U_1\in\widehat{\mathcal C}\}
\ge1-\alpha-\TV(\mathbb P_0,\mathbb P_1)\).
Intersect this event with
\(\{U_0\in\widehat{\mathcal C}\}\) and use the union bound.
\end{proof}

The following explicit collision sequence embeds the usual Le Cam argument
inside model \eqref{eq:model}.  Let \(n\) admit four mutually orthogonal
Rademacher vectors \(u,v,w,z\), each orthogonal to \(\1\), with entries
\(\pm n^{-1/2}\).  Set \(\bar P=p(\1\1^\top-I)\), \(0<p<1\), and
\[
 P_{0,\tau}=\bar P+\tau(uu^\top-vv^\top),\qquad
 P_{1,\tau}=\bar P+\tau(ww^\top-zz^\top).
\tag{8.2}\label{eq:collision}
\]
Both perturbations have zero diagonal because the squared entries of the
Rademacher vectors agree.  For sufficiently small \(\tau\), all off-diagonal
entries remain in \((0,1)\).  The top-two subspaces are
\(\operatorname{span}\{\1,u\}\) and
\(\operatorname{span}\{\1,w\}\), whose Grassmann distance is one, while their
boundary gaps are of order \(\tau\).

If the off-diagonal probabilities are bounded in
\([\eta,1-\eta]\), Bernoulli product-law calculus gives
\[
 \KL(\mathbb P_{0,\tau},\mathbb P_{1,\tau})
 \le C_\eta\norm{P_{0,\tau}-P_{1,\tau}}_F^2
 =O(\tau^2).
\]
Pinsker's inequality then yields
\(\TV(\mathbb P_{0,\tau},\mathbb P_{1,\tau})=O(\tau)\)
\citep{Tsybakov2009NonparametricEstimation}.  Theorem~\ref{thm:lecam}
forces diameter close to one with probability at least \(1-2\alpha-o(1)\)
as the gap vanishes.  This is a genuine lower bound, not a restatement of
nonuniqueness at exact collision.

\section{Discussion}\label{sec:discussion}

The analysis yields three distinctions that should be retained in applied
network work.

First, validity and informativeness are different.  A set equal to the entire
parameter space can have perfect coverage.  Reporting its nominal level
without its radius conceals the essential result.  Our output therefore
includes the raw radius, the diameter-capped radius, a gap-certificate flag
and a downstream-informativeness flag.

Second, algorithmic accuracy and inferential certification are different.
Spectral clustering may recover nearly all labels in the simulation and real
network while the uniform analytic region is trivial.  The certificate is
designed to be sufficient, not necessary.  Its conservatism is visible in
Figure~\ref{fig:coverage}.

Third, downstream discontinuities require observed margins.  Top-\(m\)
selection cannot be certified at a tie.  Conditions stated only in terms of an
unknown population margin are mathematically meaningful but operationally
incomplete.  The observed-gap rules in Theorem~\ref{thm:degree} and
Proposition~\ref{prop:katz} turn the margin into a computable decision.
Clustering is different: the present Frobenius route still needs declared
population separation and minimum-cluster envelopes, plus a verified
algorithmic approximation factor.  We label it a model-envelope certificate
rather than calling it fully observable.

\subsection{Scope and limitations}

The guarantees are conditional on a fixed probability matrix and independent
Bernoulli edges.  They do not automatically cover temporal dependence,
degree-preserving sampling, missing edges or a network selected after looking
at the same data.  Such mechanisms require a concentration inequality matched
to the actual sampling design.

The analytic balls are uniform operator-norm certificates, not claims of
optimal diameter.  They deliberately ignore entrywise structure that can
make a particular SBM or random-dot-product graph much easier.  This explains
why the karate partition can be descriptively accurate while the certificate
is trivial.  A model-specific bootstrap or rowwise theorem may be sharper,
but its extra assumptions must be reported as part of the inferential target.
In particular, ordinary Lloyd \(k\)-means does not by itself verify the
approximation factor required in Theorem~\ref{thm:clustering}; its partitions
in our simulations and application are descriptive outputs.

Finally, the method certifies a prespecified dimension \(k\), centrality
parameter \(\beta\), and selection size \(m\).  Choosing these quantities from
the observed graph and then applying the same nominal error budget entails
selection effects not covered by the stated results.  Sample splitting,
simultaneous calibration over a finite candidate set or a separately
validated model-selection procedure would be needed for that extension.

Several extensions are natural.  Regularised adjacency or Laplacian operators
can reduce sparse-graph concentration error
\citep{LeLevinaVershynin2017ConcentrationRegularization}; their finite-sample
certificates require the corresponding operator-specific bias and gap
analysis.  Rowwise eigenvector theory can produce much sharper clustering
statements than the mean-square Hamming ball
\citep{AbbeFanWang2022LpPCASpectralClustering}.  Finally, validated network
bootstraps can trade analytic conservatism for estimator-dependent
calibration.  These are complementary approaches, not interchangeable
justifications.

\section{Conclusion}\label{sec:conclusion}

We have constructed a finite-sample protocol for spectral graph procedures
that is explicit about when it has no useful answer.  Every constant in the
matrix-Bernstein radius is numerical, the ambient dimension appears in the
quantile, and the eigenspace target is orthogonally invariant.  The clustering
result now states all population-envelope, cluster-retention and algorithmic
conditions needed for a proper quotient-space Hamming ball.  Centrality
selection uses observed margins.  Simulations and a real-network application
show why these details matter: conservative coverage can coexist with a
vacuous set, and an accurate-looking output need not be certifiable.

The practical recommendation is simple.  Report the certificate only together
with its operating diagnostics.  If the gap or margin test fails, report
``no nontrivial certificate'' rather than an apparently precise but
unsupported number.

\appendix

\section{Proof details for the clustering transfer}\label{app:kmeans}

We include the deterministic argument behind Lemma~\ref{lem:kmeans} to make
the constants and the computability distinction auditable.  Rotate
\(\widehat U\) by \(Q^\circ\) and write
\(\widetilde U=\widehat UQ^\circ\).  Rotation does not change the
\(k\)-means objective or its assignments.  Put
\[
 E=\norm{\widetilde U-U_\star}_F,\qquad
 \overline U=\widehat\Theta\widehat XQ^\circ.
\]
Because \(U_\star\) itself is a feasible \(K\)-centre matrix for the
\(k\)-means problem,
\[
 \norm{\overline U-\widetilde U}_F
 \le\sqrt{1+\varepsilon}\,
 \norm{U_\star-\widetilde U}_F.
\tag{A.1}\label{eq:app-opt}
\]
The triangle inequality and
\[
 \bigl(1+\sqrt{1+\varepsilon}\bigr)^2
 =2+\varepsilon+2\sqrt{1+\varepsilon}
 \le4+2\varepsilon
\]
therefore give
\[
 \norm{\overline U-U_\star}_F^2
 \le(4+2\varepsilon)E^2.
\tag{A.2}\label{eq:app-centre}
\]

For population cluster \(a\), define
\[
 S_a=\left\{i:g_\star(i)=a,\
 \norm{\overline u_i-u_{\star i}}_2\ge\frac{\Delta}{2}\right\}.
\tag{A.3}\label{eq:bad}
\]
Then
\[
 \frac{\Delta^2}{4}\sum_{a=1}^K|S_a|
 \le\norm{\overline U-U_\star}_F^2
 \le(4+2\varepsilon)E^2,
\]
and hence
\[
 \sum_{a=1}^K|S_a|
 \le\frac{16+8\varepsilon}{\Delta^2}E^2.
\tag{A.4}\label{eq:bad-count}
\]
If every cluster contains a vertex outside \(S_a\), choose one such vertex.
Its fitted centre lies strictly within \(\Delta/2\) of the corresponding
population centre.  Two different population clusters cannot map to the same
fitted centre, since that would put their centres at distance less than
\(\Delta\).  The resulting map of centres is injective and, because there are
\(K\) of each, is a permutation.  Every vertex outside
\(\cup_aS_a\) is then correctly matched under that permutation, proving
\eqref{eq:kmeans-bound}.

For completeness, if \(\theta_1,\ldots,\theta_k\) are the principal angles,
the Procrustes and projector formulae are
\[
\begin{split}
\min_Q\norm{\widehat UQ-U_\star}_F^2
 &=2\sum_{\ell=1}^k(1-\cos\theta_\ell),\\
\norm{\widehat U\widehat U^\top-U_\star U_\star^\top}_F^2
 &=2\sum_{\ell=1}^k\sin^2\theta_\ell,\\
\norm{\widehat U\widehat U^\top-U_\star U_\star^\top}^2
 &=\max_\ell\sin^2\theta_\ell.
\end{split}
\tag{A.5}\label{eq:app-angles}
\]
Since \(1-\cos\theta\le\sin^2\theta\), the chain in
\eqref{eq:principal-angles} follows.  This establishes the constant in
Theorem~\ref{thm:clustering} without using a population quantity in the
algorithm.

\section{Exact two-block calculations}\label{app:block}

\subsection{Balanced spectrum and margin}

For a balanced two-block SBM, decompose \(\R^n\) into the span of
\(\1\), the signed block vector \(s=(\1_m,-\1_m)\), and the two
within-block zero-sum subspaces.  Direct multiplication gives
\[
\begin{split}
P\1&=((m-1)p+mq)\1,\\
Ps&=((m-1)p-mq)s,\\
Pv&=-pv
\end{split}
\]
for every within-block zero-sum \(v\).  This proves
\eqref{eq:sbm-spectrum}.  The top-two eigenbasis can be chosen as
\[
 U_\star=\begin{pmatrix}\1/\sqrt n&s/\sqrt n\end{pmatrix}.
\]
The two row types are \((1/\sqrt n,\pm1/\sqrt n)\), so their separation is
\(\Delta=2/\sqrt n\).  Substituting \(k=2,\bar\varepsilon=0\) and
\(\Delta^2=4/n\) into \eqref{eq:hamming-radius} yields
\eqref{eq:balanced-hamming}.

\subsection{Unequal-block quotient and Katz scores}

Let block sizes be \(n_1,n_2\) and
\[
 B=\begin{pmatrix}p_{11}&q\\q&p_{22}\end{pmatrix}.
\]
The block-constant invariant subspace is represented by the quotient
\[
 Q_B=
 \begin{pmatrix}
 (n_1-1)p_{11}&n_2q\\
 n_1q&(n_2-1)p_{22}
 \end{pmatrix}.
\tag{B.1}\label{eq:quotient}
\]
The remaining eigenvalues are \(-p_{11}\) with multiplicity \(n_1-1\)
and \(-p_{22}\) with multiplicity \(n_2-1\).  This provides all spectral
inputs without forming an \(n\times n\) population matrix.

For the unnormalised Katz seed \(\1\), the two population score values
\((c_1,c_2)\) satisfy
\[
 \begin{pmatrix}c_1+1\\c_2+1\end{pmatrix}
 =(I_2-\beta Q_B)^{-1}\begin{pmatrix}1\\1\end{pmatrix}.
\tag{B.2}\label{eq:katz-quotient}
\]
In the moderate model \eqref{eq:unequal-B}, this gives the values stated in
Section~\ref{sec:envelope}.  Unlike the rejected equal-row-sum example,
\(Q_B\1_2\) is not proportional to \(\1_2\), and the score margin is
strictly positive.

\section{Details for the near-collision lower bound}
\label{app:collision}

Here we verify the spectral and distributional claims following
\eqref{eq:collision}.  Because \(\1^\top u=\1^\top v=0\), the action of
\(\bar P=p(\1\1^\top-I)\) is multiplication by \(p(n-1)\) on
\(\operatorname{span}\{\1\}\) and by \(-p\) on its orthogonal complement.
Consequently, \(P_{0,\tau}\) has eigenvalues
\[
 p(n-1),\quad -p+\tau,\quad -p-\tau,\quad
 \underbrace{-p,\ldots,-p}_{n-3},
\]
with eigenvectors \(\1,u,v\) and the remaining orthogonal directions.
The analogous distinguished directions for \(P_{1,\tau}\) are \(w,z\).
Thus the top-two boundary gap is exactly \(\tau\), and the two top-two
subspaces share only \(\operatorname{span}\{\1\}\).  Since \(u\perp w\),
their largest principal angle is \(\pi/2\) and their Grassmann distance is
one.

For orthonormal vectors \(a,b\),
\(\langle aa^\top,bb^\top\rangle_F=(a^\top b)^2\).  The four rank-one
projectors in \(P_{0,\tau}-P_{1,\tau}\) are therefore Frobenius-orthogonal,
so
\[
 \norm{P_{0,\tau}-P_{1,\tau}}_F^2=4\tau^2.
\tag{C.1}\label{eq:collision-frob}
\]
If all off-diagonal probabilities lie in \([\eta,1-\eta]\), the elementary
Bernoulli bound
\[
 \KL\{\operatorname{Bern}(a),\operatorname{Bern}(b)\}
 \le\frac{(a-b)^2}{\eta(1-\eta)}
\]
and product additivity give
\[
 \KL(\mathbb P_{0,\tau},\mathbb P_{1,\tau})
 \le \frac{2\tau^2}{\eta(1-\eta)}.
\tag{C.2}\label{eq:collision-kl}
\]
The factor two rather than four appears because each undirected edge is
counted once.  Pinsker's inequality yields
\(\TV(\mathbb P_{0,\tau},\mathbb P_{1,\tau})
\le\tau/\sqrt{\eta(1-\eta)}\), completing the explicit \(O(\tau)\)
calculation used in Section~\ref{sec:lower}.

\section{Reproducibility protocol and additional diagnostics}
\label{app:repro}

The supplied program performs the following steps.
\begin{enumerate}[leftmargin=18pt]
\item For each \((n,p,q)\), compute \(v(P)\), \(g_2(P)\), the raw
matrix-Bernstein radius and the ideal exact-\(k\)-means model-envelope bound
before generating any graph.
\item Generate a symmetric Bernoulli adjacency matrix from its upper
triangle, with a fixed independent random-number stream for every simulation
cell.
\item Compute the two algebraically largest eigenvectors.  Evaluate the
Grassmann error through the smallest singular value of
\(U_\star^\top\widehat U\), avoiding formation of two dense projectors:
\[
\dgr(\widehat U,U_\star)
=\sqrt{1-\sigma_{\min}^2(U_\star^\top\widehat U)}.
\]
\item Apply descriptive Lloyd \(k\)-means to the rows of \(\widehat U\), with
ten initialisations, and match labels by the Hungarian assignment algorithm.
No approximation factor is attributed to this heuristic.
\item Save a checkpoint after every cell, followed by the final summary CSV
files.  Figures read those saved summaries.
\end{enumerate}

The nested bootstrap experiment reuses the same observed eigenspace in both
comparators.  The plug-in SBM method estimates the two within/between
probabilities from the spectral labels.  The simplified ASE-\(k\)NN method scales the
observed eigenvectors by the square roots of their nonnegative eigenvalues,
finds \(20\) nearest neighbours for each vertex, averages their adjacency
rows, symmetrises and clips the resulting probability estimate, and draws
independent Bernoulli graphs.  The bootstrap radius is the empirical
conditional \(0.95\) quantile of the displacement from the observed
eigenspace.  These radii are empirical comparators, not substituted into
Theorem~\ref{thm:grassmann}.  The comparator does not implement the
exchangeability validation step of the full method cited in the text.

\begin{table}[ht]
\centering
\caption{Outputs supplied for independent checking.}
\begin{tabular}{>{\raggedright\arraybackslash}p{0.38\textwidth}
>{\raggedright\arraybackslash}p{0.53\textwidth}}
\toprule
File&Contents\\
\midrule
\path{anc/results/simulation_summary.csv}&Twelve-cell coverage, radius, error and
clustering summary\\
\path{anc/results/operating_envelope_curve.csv}&Radius and quotient-Hamming operating curves
over \(n\)\\
\path{anc/results/operating_envelope_grid.csv}&Crossing sample sizes over the
\((p,q)\) grid\\
\path{anc/results/centrality_simulation_summary.csv}&Degree-band coverage, certificate
firing and selection accuracy\\
\path{anc/results/bootstrap_comparison_raw.csv}&All outer-replication bootstrap
radii and coverage indicators\\
\path{anc/results/karate_application.json}&Every numerical input and diagnostic for
the real-network analysis\\
\path{anc/scripts/run_analysis.py}&Complete simulation, application and figure code\\
\bottomrule
\end{tabular}
\label{tab:files}
\end{table}

The script records the master seed and software versions.  The full
\(12{,}000\)-graph study, \(1000\)-graph centrality study and nested bootstrap
are run from the package root by
\begin{verbatim}
python anc/scripts/run_analysis.py --replications 1000 \
  --centrality-replications 1000 \
  --bootstrap-outer 100 --bootstrap-inner 100
\end{verbatim}
No numerical value is manually entered into a figure.

\section*{Data and code availability}

The arXiv ancillary files include the complete Python program, fixed random
seed, generated CSV outputs and the code that generates every figure and
table.  An identical archive will also be deposited in a permanent public
repository before submission; the package includes a short pre-submission
checklist so that the final repository URL is added to the manuscript and
cover letter rather than fabricated here.
The Zachary karate-club graph is distributed through the standard NetworkX
dataset implementation and is originally documented by
\citet{Zachary1977KarateClub}.  No confidential data are used.

Version 1 of this article corresponds to the rejected manuscript and remains
available at
\href{https://arxiv.org/abs/2602.10566v1}{arXiv:2602.10566v1}.
The present version is a complete reconstruction with corrected finite-sample
constants, nonvacuity analysis, computable certificates and empirical
validation.

\section*{Acknowledgements}

The authors gratefully acknowledge the staff and students of the Department of Mathematics, Government College (Autonomous), Rajahmundry, for their valuable advice and helpful discussions during the preparation of this article. The authors also sincerely thank the Principal of the College for continued encouragement and moral support throughout this research.

\section*{Conflict of interest}

The authors declare no conflict of interest.

\section*{Funding}

The authors received no external funding for this work.

\section*{Ethical approval}

The study uses simulated graphs and a publicly available historical network.
No new data were collected from human participants or animals.

\bibliographystyle{plainnat}
\bibliography{ref}

\end{document}